\documentclass[12pt]{article}

\usepackage[english]{babel}
\usepackage[letterpaper,top=2cm,bottom=2cm,left=3cm,right=3cm,marginparwidth=1.75cm]{geometry}

\usepackage{amsmath}
\usepackage{amsthm}
\usepackage{amssymb}
\usepackage{xcolor}
\usepackage{hyperref}
\usepackage{tikz}
\usepackage{pgfplots}
\pgfplotsset{compat=1.18}

\theoremstyle{plain}
\newtheorem{thm}{Theorem}

\newtheorem{lemma}[thm]{Lemma}

\newtheorem{prop}[thm]{Proposition}
\newtheorem{conj}[thm]{Conjecture}

\theoremstyle{definition}
\newtheorem{defn}[thm]{Definition}

\theoremstyle{remark}
\newtheorem{rem}[thm]{Remark}
\newtheorem{ex}[thm]{Example}
\newtheorem{prob}[thm]{Problem}

\newcommand{\cF}{\mathcal{F}}

\newcommand{\bR}{\mathbb{R}}
\newcommand{\bN}{\mathbb{N}}
\newcommand{\bZ}{\mathbb{Z}}

\newcommand{\norm}[1]{\Vert#1\Vert}
\newcommand{\abs}[1]{\vert#1\vert}

\newcommand{\rar}{\rightarrow}

\title{Short spherical $t$-design curves}
\author{Emily J.\ King\thanks{Department of Mathematics, Colorado State University, Fort Collins, CO} \thanks{Corresponding author: emily.king@colostate.edu}\and Dustin G.\ Mixon\thanks{Department of Mathematics, The Ohio State University, Columbus, OH} \thanks{Translational Data Analytics Institute, The Ohio State University, Columbus, OH}}
\date{}

\begin{document}
\maketitle

\begin{abstract}
We study the minimum arclength of spherical $t$-design curves, i.e., closed rectifiable curves on $S^d$ whose normalized arclength measure exactly integrates every polynomial of degree at most $t$. 
We prove an explicit spectral lower bound that is sharp for $t=1$ in all spheres and for $t=2$ in every odd-dimensional sphere, yielding the first exact optimality results for spherical $t$-design curves with $t>1$. 
For even-dimensional spheres, we construct $2$-design curves whose lengths asymptotically match the lower bound as $d\to\infty$, and in $S^2$, we use numerical optimization and the calculus of variations to derive a candidate for the shortest $2$-design curve.
\end{abstract}

\section{Introduction}

Given a function $f$ on a measure space $(X,\mu)$, it might be intractable to explicitly compute the integral $\int_X f(x) \, d\mu(x)$.
As an alternative, one might seek a user-friendly measure $\nu$ on $X$ that approximates $\mu$ in the sense that
\begin{equation}\label{eqn:genquad}
     \int_X f(x) \, d\nu(x) \approx \int_X f(x) \, d\mu(x)
\end{equation}
for every function $f$ in some representative subclass $\cF$ of $\mu$-integrable functions.
Inspired by the geometric beauty of \emph{spherical $t$-designs}~\cite{McLaren:63,GoethalsS:79,Bannai:09} and applications in areas as diverse as mobile sampling~\cite{BenedettoW:00,GrochenigRUV:15,JamingNR:21,JayeM:22,UnnikrishnanV:13,UnnikrishnanV:13b,BoyerCCKW:16} and quantum information~\cite{XuWVA:25,PilatowskyCameoMCH:24,MarkSESCREC:24}, we build on the work in~\cite{EhlerG:23,EhlerGK:25,Ehler:25,EhlerG:25,Lindblad:24,Lindblad:25} that studies \emph{spherical $t$-design curves}.
Namely, our goal is to address~\eqref{eqn:genquad} in the case that $X$ is the unit sphere $S^d$ in $\bR^{d+1}$, $\mu$ is the uniform probability measure on $S^d$, $\cF$ is the space of restrictions to $S^d$ of the polynomials over $\bR^{d+1}$ of degree at most $t$, and $\nu$ is the pushforward of the normalized arclength probability measure of some closed rectifiable curve $\gamma\colon [0,2 \pi)\rar S^d$.
We focus on spherical $t$-design curves $\gamma$ of minimal or asymptotically minimal length.
Our main results give explicit lower bounds on the length of such curves as well as explicit and implicit constructions of optimal and near-optimal curves for small $t$.
In fact, we provide the first proofs of shortest spherical $t$-design curves with $t>1$.

\subsection{Previous work}
\subsubsection{Numerical quadrature}
As a simple example of our motivating equation~\eqref{eqn:genquad}, consider the integral form of the mean value theorem: if $f\colon [0,1] \rar \bR$ is continuous, there exists $c \in (0,1)$ such that 
\[
f(c) = \int_0^1 f(x)\, dx.
\]
While this result may be useful for undergraduate analysis homework, it does not help compute integrals since $c$ depends on $f$ and is non-constructive.
Instead, considering the general case of a probability measure space $(X,\mu)$, one might search for points $\{x_n\}_{n=1}^N$ in $X$ and positive weights $\{w_n\}_{n=1}^N$ that sum to $1$ such that 
\begin{equation}\label{eqn:quad}
    \sum_{n=1}^N w_n f(x_n) \approx \int_X f(x) \, d\mu(x)
\end{equation}
for all functions $f$ in some subclass $\mathcal{F}$ of $\mu$-integrable functions.
This corresponds to taking $\nu$ in~\eqref{eqn:genquad} to be an atomic probability measure.
Such \textit{numerical quadrature} (also known as \textit{cubature}) formulas have a long history in numerical integration~\cite{Radon:48}, in some sense going back to the very foundations of calculus~\cite{Newton:36}. 
(See also~\cite{deLaHarpeP:05} and the references therein.)
There is a universal approximation theorem for~\eqref{eqn:quad}:
for any path-connected topological space $X$, any Borel probability measure $\mu$ of full support, any finite-dimensional vector space $\cF$ of continuous $\mu$-integrable functions, and any sufficiently large $N$, there exists a choice $\{x_n\}_{n=1}^N$ of points in $X$ that satisfy~\eqref{eqn:quad} with equality for uniform weights $w_n = 1/N$; see~\cite{SeymourZ:84}. 
As with the mean value theorem, the existence of such a sampling set $\{x_n\}_{n=1}^N$ is non-constructive.

In the special case where $X$ is a compact homogeneous space, we may take $\mu$ to be the uniform probability measure over $X$, in which case it is perhaps not surprising that the sampling set must be somewhat uniformly spread over $X$.
In general, one may say that a uniformly spread sampling set allows for quadrature.
One such measure of the uniformity is \emph{discrepancy}; Koksma--Hlawka--type inequalities bound the approximation error in~\eqref{eqn:quad} as a product of the discrepancy of $\{x_n\}_{n=1}^N$ and the variation of $f$ (see, e.g.,~\cite{KuipersN:74,GrabnerKT:97}).
When the approximation in~\eqref{eqn:quad} is measured by upper and lower bounds with respect to $p$th powers, i.e., the existence of $0 < A \leq B<\infty$ such that for some $p \geq 1$,
\begin{equation}
\label{eqn:MZquad}
  A \int_X \abs{f(x)}^p \, d\mu(x) \leq \sum_{n=1}^N w_n \abs{f(x_n)}^p \leq B \int_X \abs{f(x)}^p \, d\mu(x),
\end{equation}
the inequalities are called \emph{Marcinkiewicz--Zygmund} as they generalize relationships between moments of independent random variables.
(See, e.g.,~\cite{Lubinsky:98} and the references therein.)
As physical limitations on measurement devices may constrain the sampling set $\{x_n\}_{n=1}^N$, it is possible to choose the weights $\{w_n\}_{n=1}^N$ so that~\eqref{eqn:MZquad} holds provided the sampling set is not too irregularly distributed~\cite{MhaskarNW:01}.

\subsubsection{Spherical $t$-designs}

Consider the case where $X$ is the unit sphere $S^d$ in $\mathbb{R}^{d+1}$, $\mu$ is the uniform probability measure on $S^d$, and $\cF$ is the vector space  $\Pi_d^t$ of polynomial functions over $S^d$ of degree at most $t$.
In this setting, if~\eqref{eqn:quad} holds with equality for uniform weights $w_n = 1/N$, then the sampling set $\{x_n\}_{n=1}^N$ is known as a \emph{spherical $t$-design}~\cite{McLaren:63,GoethalsS:79}.
Motivated by numerical integration, McLaren~\cite{McLaren:63} first introduced this concept in 1963.
In 1977, Delsarte, Goethals, and Seidel~\cite{DelsarteGS:77,GoethalsS:79} independently re-introduced the concept and coined the term ``spherical $t$-designs,'' viewing them as interesting objects in the context of algebraic combinatorics.
This mixed background highlights the simultaneous utility and beauty of spherical $t$-designs.
Interestingly, the (polynomial) discrepancy of a spherical $t$-design is inversely related to $t^2$, meaning that for large enough $t$, a spherical $t$-design satisfies an \textit{approximate} quadrature formula \`{a} la~\eqref{eqn:quad} for a much larger class of functions than the polynomials for which it satisfies an \textit{exact} quadrature formula.
(See~\cite{GrabnerKT:97} and the references therein.)
Some applications of spherical $t$-designs include hyperinterpolation~\cite{Sloan:95}, spherical partial differential equations~\cite{Wu:23}, spherical integral equations~\cite{AtkinsonB:87}, and spherical data processing~\cite{ChenW:18}.
For review articles on spherical $t$-designs, see the classic reference~\cite{Bannai:09} and the recent survey~\cite{AnZ:26}.

As a consequence of the uniform approximation theorem, for each $(t,d)$, there exist $t$-designs in $S^d$ of all sufficiently large sizes \cite{SeymourZ:84}, but it is interesting both in theory and practice to determine the smallest possible size.
In their first paper on the subject~\cite{DelsarteGS:77}, Delsarte, Goethals, and Seidel used Gegenbauer polynomials to prove that the number of points in a $t$-design on $S^d$ is bounded below by
\begin{equation}\label{eqn:dgsbd}
N_{t,d}
:=
\left\{
\begin{array}{cl}
\displaystyle
\binom{d+m}{d}+\binom{d+m-1}{d}
&\text{if } t=2m\\[8pt]
\displaystyle
2\binom{d+m}{d}
&\text{if } t=2m+1,
\end{array}\right.
\end{equation}
and they refer to a spherical $t$-design that saturates that bound as \emph{tight}.

There are a few infinite classes of tight spherical $t$-designs.
In $S^1$, the vertices of regular $(t+1)$-gons are precisely the tight $t$-designs.
For arbitrary $d \in \bN$, the tight $2$-designs in $S^d$ are the $d+2$ vertices of regular simplices, and the tight $3$-designs are the $2(d+1)$ vertices of cross polytopes.
Beyond these examples, tight designs seem to be rare.
In fact, for $d \geq 2$, there are no tight $t$-designs in $S^d$ when $t=6$ or $t \geq 8$, with the exception of a unique sporadic design with $(t,d) =(11,23)$~\cite{BannaiD:79,BannaiD:80,BannaiS:81}.
Since a tight $4$-design in $S^d$ exists if and only if a tight $5$-design in $S^{d+1}$ exists~\cite{DelsarteGS:77}, this leaves $t \in \{5,7\}$ as the interesting cases; however, there are infinitely many $d \geq 2$ for which there is no tight $5$- or $7$-design in $S^d$~\cite{BannaiMV:05,NebeV:13}.
Overall, it seems that for most choices of $(t,d)$, there does not exist a tight $t$-design in $S^d$.  

One may then ask: For a fixed $(t,d)$, what is the minimal cardinality of a spherical $t$-design in $S^d$?  
There is less literature concerning such results.  
Two papers prove the minimality and uniqueness of an $11$-design on $S^3$; this design has $120$ points, while the Delsarte--Goethals--Seidel bound~\eqref{eqn:dgsbd} yields $112$ \cite{Andreev:00,BoyvalenkovD:01}.
See also~\cite{SloaneHC:03,HardinS:92,Graef:online,Womersley:online,HardinS:96} for numerically approximated putatively minimal spherical $t$-designs.
A larger body of work is concerned with the asymptotically minimal size of designs where $d$ is fixed and $t \rar \infty$.
A tighter bound for larger $t$ is proven in~\cite{Yudin:97}.
Namely, let $N(d,t)$ denote the minimum cardinality of a spherical $t$-design in $S^d$.  
For a fixed $d \in \bN$,
\begin{equation}\label{eqn:asympptbd}
N(d,t) \geq \frac{2^{d}}{(d+1)^{d+1}}t^{d}\left( 1 + O(t^{-2/3})\right)
\end{equation}
as $t \rar \infty$.
Noting that~\eqref{eqn:asympptbd} (and also~\eqref{eqn:dgsbd}) gives a lower bound of the form $c_d t^d$, where $c_d$ is a constant depending only on $d$, one may ask if there is always a $t$-design in $S^d$ with $\asymp t^d$ points for fixed $d$ and large $t$.
In groundbreaking work~\cite{BondarenkoRV:13}, Bondarenko, Radchenko, and Viazovska showed that for every $N \geq c_d t^d$ with $c_d$ only dependent on $d$ there is a $t$-design in $S^d$ of size $N$, solving a conjecture of~\cite{KorevaarM:93}.
This proof is non-constructive, using Brouwer degree theory.

\subsubsection{Approximation by curves}

There are many applications in which curves (or higher-dimensional sets) are preferred over points. 
Applications to robotics motivated the authors of~\cite{RamamoorthyRW:08} to generalize the discrepancy measure from point sets to curves, which was then used to find paths for robots to traverse and sense two-dimensional regions.
Inspired in part by magnetic resonance imaging (MRI), mobile sampling involves recovering a bandlimited function on $\bR^d$ from its values on a curve (or more generally, a submanifold) with small path density~\cite{BenedettoW:00,GrochenigRUV:15,JamingNR:21,JayeM:22,UnnikrishnanV:13,UnnikrishnanV:13b,BoyerCCKW:16}.
In~\cite{BoyerCCKW:16}, the sampling curves are further restricted to those feasibly implementable in MRI machines; in particular, sampling along ``nice enough'' curves is much cheaper in an MRI than at a discrete set of points.
Mobile sampling corresponds to solving~\eqref{eqn:genquad} in the setting where $\mu$ is Lebesgue measure, $\nu$ is the $s$-dimensional Hausdorff measure restricted to an $s$-dimensional set, $\cF$ is Paley--Wiener space, and approximation is measured with respect to $L^2(\mu)$.

There are many more applications of approximation by curves.
For example, to help characterize realizable configurations of polymers and DNA molecules, various authors have sought to optimize the geometry of non-self-intersecting curves with bounded curvature in $\bR^{d+1}$ and $S^d$~\cite{CantarellaKS:02,GerlachM:11,GerlachM:11b,YuSC:21}.
The approximation of a 3D shape by a curve can also be used to improve 3D-printing methods~\cite{ChenSGCZ:17}.
Space-filling curves have even found applications in optimization~\cite{Goertzel:99}, compression~\cite{ChenYW:22}, and machine learning~\cite{TsinganosCCJS:21}. 
Principal curves generalize one-dimensional principal component analysis (PCA) by fitting curves to data sets to extract properties and interpolate data~\cite{HastieS:89,Hauberg:15,KeglKLZ:00,LeeKO:21}; this may be thought of as approximating the uniform probability measure of the point set with the pushforward of Lebesgue measure with respect to the fitted curve; the error is measured as the sum of squares of distances from the points to the fitted curve.

There are several ways to quantify the error in~\eqref{eqn:genquad} in cases where $\mu$ is a more general Borel measure and $\nu$ is supported on a curve.
For example, one may use discrepancy~\cite{EhlerGNS:21}, the $p$-Wasserstein distance for various choices of $p$~\cite{LebratGKW:19,ChauffertCKW:17,ChambolleDM:25,LuS:16}, or Sinkhorn divergences, which interpolate between maximum mean discrepancies and Wasserstein distances~\cite{FeydySVATP:19}.

\subsubsection{Spherical $t$-design curves}
Spherical $t$-design curves were introduced by Ehler and Gr\"ochenig in~\cite{EhlerG:23} to provide a theoretical groundwork for sampling and approximation by curves.
Recall that $\Pi_d^t$ denotes the vector space of polynomial functions over $S^d$ of degree at most $t$.

\begin{defn}
Let $\gamma\colon [0,2 \pi)\rar S^d$ be a Lipschitz parameterization of a closed, rectifiable curve, and let
\[
L(\gamma) 
:= \int_0^{2\pi} \norm{\gamma'(\theta)} \,d\theta
\]
denote the arclength of $\gamma$.
We say $\gamma$ is a \emph{spherical $t$-design curve} if for all $f \in \Pi_d^t$,
    \[
    \int_{S^d} f(x)\, d\sigma(x) = \frac{1}{L(\gamma)}\int_0^{2\pi} f(\gamma(\theta)) \, \norm{\gamma'(\theta)} \,d\theta,
    \]
where $\sigma$ denotes the uniform probability measure over $S^d$.
\end{defn}

Of particular interest are spherical $t$-design curves that (at least asymptotically in $t$ for fixed $d$) have minimal length.  
The following result~\cite{EhlerG:23} generalizes the asymptotic bounds for the cardinality of $t$-designs (e.g., \eqref{eqn:asympptbd}).

\begin{thm}\label{thm:asymplength}
    If $\gamma$ is a spherical $t$-design curve on $S^d$ with arclength $L(\gamma)$, then
    \[
    L(\gamma) \geq C_d t^{d-1}
    \]
    for some $C_d>0$ which is independent of $t$ and $\gamma$.
\end{thm}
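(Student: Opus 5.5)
The plan is a localized test polynomial (``needle'') argument, the curve analogue of the standard lower bound for point designs. Fix a $t$-design curve $\gamma$ of length $L=L(\gamma)$ and set $\epsilon := c/t$ for a small absolute constant $c$. First, I claim the $\epsilon$-neighborhood of the image of $\gamma$ must meet every spherical cap of radius $\epsilon$. Suppose instead that some cap $B(x_0,\epsilon)$ misses the curve entirely. Take a nonnegative polynomial $p\in\Pi_d^t$ concentrated near $x_0$, for instance $p(x) = q(\langle x, x_0\rangle)$ with $q$ the square of a suitable Jacobi/Gegenbauer-type kernel of degree $\lfloor t/2\rfloor$, normalized so that $p\ge 0$ everywhere and $\int_{S^d} p\,d\sigma > 0$. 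The design property then forces $\int_{S^d} p\,d\sigma = \frac1L\int p(\gamma)\,\|\gamma'\|$. The needle estimates should push the right-hand side below the left-hand side whenever the curve stays outside $B(x_0,\epsilon)$ with $\epsilon\asymp 1/t$, which gives the contradiction. This step is the standard ``covering radius of a $t$-design is $O(1/t)$'' argument (Yudin/Reimer type). I expect it to be the main technical obstacle: one has to choose the kernel carefully so that the mass of $p$ outside the cap is strictly smaller than its mean.

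To make the contradiction cleaner, I would instead use a kernel $p\ge0$ of degree $\le t$ with $p(x)\le 0$ outside the cap, i.e.\ the Delsarte--Yudin linear-programming polynomial. Concretely, set $p=1_{[s,1]}$-approximant $f(\langle x,x_0\rangle)$ with $f\le 0$ on $[-1,s]$ and $f_0>0$, where $s=\cos\epsilon$; Yudin shows such $f$ exists of degree $t$ with $1-s\asymp t^{-2}$. If the curve avoids the cap, then $\frac1L\int f(\langle\gamma,x_0\rangle)\|\gamma'\|\le 0<f_0=\int f\,d\sigma$, a contradiction. This gives the covering statement without estimating tails.

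Second, I convert the covering statement into a length bound. The curve, of length $L$, is covered by at most $\lceil L/\epsilon\rceil$ balls of radius $\epsilon$, centered at points spaced $\epsilon$ apart along $\gamma$. Every point of $S^d$ lies within $\epsilon$ of the curve, hence within $2\epsilon$ of one of these centers. So $S^d$ is covered by $\lceil L/\epsilon\rceil$ caps of radius $2\epsilon$, and comparing volumes gives
\[
\Big(\frac{L}{\epsilon}+1\Big)\, c_d (2\epsilon)^d \ \ge\ 1 .
\]
Since $\epsilon\asymp 1/t$, this yields $L\gtrsim \epsilon^{1-d}\asymp t^{d-1}$ for $t$ large. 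For $d=1$ the bound $L\ge C_1$ is trivial, because a closed curve that is a design for $t\ge1$ has mean zero and so cannot be a point; in general the finitely many small $t$ are handled the same way. The constant then depends only on $d$, as required.
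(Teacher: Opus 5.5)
Your argument is sound in its main line, but it takes a genuinely different route from the paper. The paper never argues by covering. It obtains this statement as a corollary of Theorem~\ref{thm.lower bound}, which works as follows:
\begin{itemize}
\item It pulls back an orthonormal basis of harmonics of degree at most $\lfloor t/2\rfloor$ along a constant-speed parameterization. The design property makes the pullbacks orthonormal on the circle, since all pairwise products have degree at most $t$.
\item It bounds their total Dirichlet energy below by water-filling against the circle eigenvalues $0,1,1,4,4,\ldots$, giving at least $A_{t,d}\asymp N_{t,d}^3$.
\item It computes the same energy exactly as $B_{t,d}(L/2\pi)^2$ with $B_{t,d}\asymp t^2N_{t,d}$, using the tight-frame identity for projected gradients.
\end{itemize}
Hence $L\gtrsim N_{t,d}/t\asymp t^{d-1}$. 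The constant is explicit, no separate small-$t$ argument is needed, and the underlying bound is exactly sharp for $t=1$ and for $t=2$ with $d$ odd.

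Your proof uses far less structure. It needs only positivity of the arclength measure and exactness on one zonal polynomial per cap; this forces $\epsilon$-density with $\epsilon\asymp 1/t$, and a covering count finishes. That buys robustness: it applies verbatim to positively weighted design curves, or to any positive measure on a curve that is exact on $\Pi_d^t$. The same count on an $s$-dimensional support gives the scaling $t^{d-s}$. The price is an inexplicit, lossy constant (the $2^d$ from doubling radii, plus Gegenbauer-zero asymptotics). It is also valid only for $t$ beyond some $t_0(d)$, and it gives no access to sharp values.

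One step needs repair. ``Cannot be a point'' only gives $L(\gamma)>0$ for each individual curve. For $t\le t_0(d)$, and for $d=1$, you need a bound uniform in $\gamma$. The fix is one line:
\begin{itemize}
\item Every linear functional has zero mean along $\gamma$, so $\gamma$ lies in no open hemisphere.
\item A closed curve of length $L<\pi$ stays within geodesic distance $L/2<\pi/2$ of each of its points, so it lies in the open hemisphere centered at any of them.
\end{itemize}
So $L\ge\pi$ for every $t\ge1$ (the paper's appendix even gives $2\pi$). Taking $C_d\le\pi t_0^{1-d}$ then handles the finitely many small $t$.

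You can also make the appeal to Yudin self-contained. Put $k=\lfloor (t-1)/2\rfloor$ and $w(u)=(1-u^2)^{(d-2)/2}$. The maximum of $\int_{-1}^1 u\,q(u)^2w(u)\,du\big/\int_{-1}^1 q(u)^2w(u)\,du$ over $\deg q\le k$ is the largest zero of the degree-$(k+1)$ orthogonal polynomial for $w$, which is $1-O(t^{-2})$. So for $s$ just below it, some $f(u)=(u-s)q(u)^2$ of degree at most $t$ is nonpositive on $[-1,s]$ and has positive mean.
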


The lower bound in Theorem~\ref{thm:asymplength} is asymptotically sharp for $d \leq 3$: 
\cite{EhlerG:23} takes spherical $t$-designs and connects the points with arcs of length $O(1/t)$ to form $t$-design curves.
Applying this procedure to the non-constructive existence results of~\cite{KorevaarM:93,BondarenkoRV:13} yields the existence of spherical $t$-design curves in $S^2$ which are asymptotically of minimal length as $t \rar \infty$ and $t$-design curves in $S^d$ for all $(t,d)$.
To get shorter curves, spherical design points were connected by geodesics in~\cite{EhlerGK:25}.
Unfortunately, this process does not generally result in a spherical $t$-design curve; it does, however, yield curves of length $\asymp t^{d-1}$ that satisfy Marcinkiewicz--Zygmund inequalities (\`a la~\eqref{eqn:MZquad}).
The construction also yields a useful initialization for numerical schemes.
In a couple of cases, the authors were able to exactify the curves output by the numerical algorithm. 
Instead of modifying $t$-design sets to make curves, Lindblad constructs $t$-design curves in $S^3$ by using the Hopf map to lift $\lfloor t/2\rfloor$-design curves in $S^2$~\cite{Lindblad:24}.
Applying this result to~\cite{EhlerG:23} gives $t$-design curves of asymptotically minimal length in $S^3$.
For an explicit construction of $t$-design curves (i.e., one that does not rely on a non-constructive proof of $t$-design sets), \cite{Ehler:25}~takes an edge--transitive polytope with $t$-homogeneous symmetry group and projects an Euler circuit of edges onto the surface of the sphere.

The Marcinkiewicz--Zygmund bounds in~\cite{EhlerGK:25} depend only on~$d$ and not~$t$. 
In~\cite{Lindblad:25}, a related concept called \emph{$\epsilon_t$-approximate $t$-design curves} was investigated, where as $t \rar \infty$, the curves provide more exact quadrature with error $\asymp 1/t$.
In this case, a substantial distinction between odd and even dimensions emerges. 
In odd dimensions, there again exist families of curves of length $\asymp t^{d-1}$, whereas in even dimensions, the construction obtained there has length $\asymp t^{2d-3}$.

In quantum error correction, spherical $t$-design curves may give rise to so-called tiger codes~\cite{XuWVA:25}.
The definition of spherical $t$-design curves is also related to Hilbert-space $k$-ergodicity~\cite{PilatowskyCameoMCH:24,MarkSESCREC:24}.
There are various generalizations of spherical $t$-design curves: weighted spherical $t$-design curves~\cite{EhlerG:23}, approximate spherical $t$-design curves~\cite{EhlerG:25,Lindblad:25}, and variants with measures of higher-~\cite{EhlerG:25} or mixed-dimensional~\cite{Ehler:25} support.

\subsection{Overview and roadmap}

In the original paper on spherical $t$-design curves~\cite{EhlerG:23}, the authors pose a few questions including the following two concerning the asymptotics of spherical $t$-design curves (with respect to $t \rar \infty$): ``What is the minimal order of the arc length of a $t$-design curve?'' and ``Are there $t$-design curves on $S^d$ achieving the optimal order of arc length?''
We seek to answer nonasymptotic versions of those questions, namely

\begin{prob}[main problem]\
\label{prob.main}
\begin{itemize}
\item[(a)]
What is the infimal arclength of $t$-design curves in $S^d$?
\item[(b)]
Which $t$-design curves in $S^d$ minimize arclength?
\end{itemize}
\end{prob}

The authors of~\cite{EhlerG:23} comment that the shortest $1$-design curves are precisely great circles, but no tight bounds have appeared in the literature for $t \geq 2$. In Theorem~\ref{thm.lower bound}, we provide a partial answer to Problem~\ref{prob.main}(a) with an explicit lower bound on the length of $t$-design curves.
This strengthening of Theorem~\ref{thm:asymplength} is a continuous analogue of the Delsarte--Goethals--Seidel bound~\eqref{eqn:dgsbd} for $t$-designs, and it is tight when $t=1$ and when $t=2$ and $d$ is odd (Example~\ref{ex.opt for t=2 and d odd}).
As with the Delsarte--Goethals--Seidel bound, we assume that this bound is not tight in general.
For example, we give conjectures for the shortest $2$-design curve in $S^2$ (Conjecture~\ref{conj:2S2}, Figure~\ref{fig.shortest 2-design curve in S^2}) and the shortest $3$-design curves in $S^d$ for $d$ odd (Conjecture~\ref{conj:3Sodd}), all of which fail to achieve equality in our bound.

\begin{figure}
\centering
\includegraphics[width=0.4\textwidth, trim=4cm 2cm 3.5cm 1.5cm, clip]{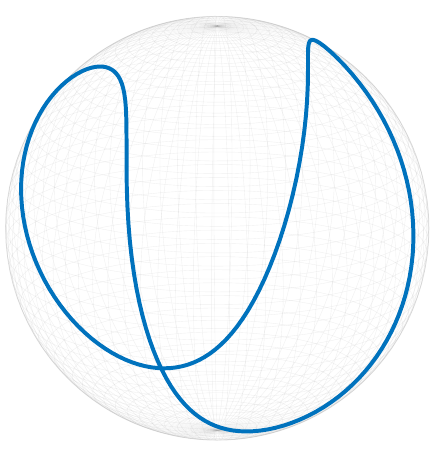}
\caption{The putatively shortest $2$-design curve in $S^2$; see Subsection~\ref{subsec.putatively shortest}}
\label{fig.shortest 2-design curve in S^2}
\end{figure}

The proof of Theorem~\ref{thm.lower bound} suggests one method to partially answer Problem~\ref{prob.main}(b), since any $t$-design curve in $S^d$ whose length saturates the lower bound must be minimal.
When our bound is not tight, a different approach is required.
We provide an ansatz (Lemma~\ref{lem.ansatz}) for $2$-design curves in $S^{2k}$ that incorporates intuition from the proof of Theorem~\ref{thm.lower bound} and exhibits structure we observed in numerically optimized piecewise-geodesic $2$-design curves in $S^{2k}$.
In Subsection~\ref{subsec.putatively shortest}, we leverage that ansatz to derive the putatively shortest $2$-design curve in $S^2$ (Conjecture~\ref{conj:2S2}) using the calculus of variations; see Figure~\ref{fig.shortest 2-design curve in S^2} for an illustration.
Our conjecturally minimal $3$-design curves in $S^d$ for $d$ odd (Conjecture~\ref{conj:3Sodd}) already appear in the $t$-design curve literature~\cite{EhlerGK:25,Lindblad:24} as well as the polytope literature~\cite{BarvinokN:08,Novik:23,Smilansky:85,Smilansky:90}; our approach in this case was to numerically optimize piecewise-geodesic curves, and our experiments consistently returned approximations to these putatively  shortest curves.

As another approach to Problem~\ref{prob.main}(b), in Theorem~\ref{thm.asymptotically shortest}, we use our general lower bound (Theorem~\ref{thm.lower bound}) and our ansatz (Lemma~\ref{lem.ansatz}) to prove asymptotically matching upper and lower bounds on the length of the shortest $2$-design curves in $S^{2k}$.
The upper and lower bounds are the same asymptotically in $d$; this differs from the usual asymptotic bounds that fix $d$ and send $t\to\infty$.
Our upper bound follows from an explicit construction (Theorem~\ref{thm.sine ansatz}) of short $2$-design curves in $S^{2k}$.

In Section~\ref{sec:genlowbd}, we prove the general lower bound on the lengths of $t$-design curves in $S^d$ and give examples of curves that saturate the bound.
Our ansatz for $2$-design curves in $S^{2k}$ and its application to construct such curves for all $k$ appear in Section~\ref{sec:ans}.
Section~\ref{sec:short2} contains upper and lower bounds on minimal length of $2$-design curves in $S^{2k}$ as well as a derivation of the putatively shortest $2$-design curve in $S^2$.
We conclude with a discussion (Section~\ref{sec:disc}) of key problems for future work.

\section{A general lower bound}\label{sec:genlowbd}

We begin with the main estimate of the paper: a lower bound on the length of an arbitrary spherical $t$-design curve in $S^d$.
As we will see, this lower bound is sharp in multiple regimes of $(t,d)$. 

\begin{thm}
\label{thm.lower bound}
Every $t$-design curve in $S^d$ has length at least
\[
2\pi\cdot\sqrt{\frac{A_{t,d}}{B_{t,d}}},
\]
where $A_{t,d}$ denotes the sum of the first $N_{t,d}-1$ terms in the sequence
\[
1,1,4,4,9,9,\ldots,n^2,n^2,\ldots,
\]
with $N_{t,d}$ given by~\eqref{eqn:dgsbd}, and
\[
B_{t,d}
:=
\left\{
\begin{array}{cl}
\displaystyle
\frac{m(d+m)}{d+2}
\bigg(
\binom{d+m}{d}+\binom{d+m-1}{d}
\bigg)
&\text{if } t=2m\\[12pt]
\displaystyle
\binom{d+m}{d}
\bigg(
1+\frac{2m(d+m+1)}{d+2}
\bigg)
&\text{if } t=2m+1.
\end{array}\right.
\]
\end{thm}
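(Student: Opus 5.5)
The plan is a continuous Delsarte--Goethals--Seidel argument: pull a well-chosen space of polynomials back to the circle, and compare two Dirichlet energies. One is forced from below by the spectrum of $-d^2/d\theta^2$, and the other is computed on the sphere by rotation invariance. Parameterize $\gamma$ with constant speed, $\norm{\gamma'(\theta)} = L/2\pi$, so that the design condition reads $\int_{S^d} f\,d\sigma = \frac{1}{2\pi}\int_0^{2\pi} f(\gamma(\theta))\,d\theta$ for $f\in\Pi_d^t$.

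Take $t=2m$ first, and let $V=\Pi_d^m$. Its dimension is $\binom{d+m}{d}+\binom{d+m-1}{d}=N_{t,d}$. Choose an orthonormal basis $f_1,\dots,f_N$ of $V$ in $L^2(\sigma)$ adapted to the harmonic decomposition $\bigoplus_{k\le m}\mathrm{Harm}_k$. Every product $f_if_j$ has degree at most $2m=t$, so the functions $g_i:=f_i\circ\gamma$ are orthonormal in $L^2(d\theta/2\pi)$.

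By the min-max principle (Ky Fan), any $N$ orthonormal functions on the circle satisfy
\[
\sum_i \frac{1}{2\pi}\int_0^{2\pi}\abs{g_i'}^2\,d\theta \;\ge\; \sum_{j=1}^{N}\lambda_j,
\]
where $0,1,1,4,4,\dots$ are the eigenvalues of $-d^2/d\theta^2$. The right-hand side is exactly $A_{t,d}$. On the other hand, $g_i'(\theta)=\nabla_S f_i(\gamma(\theta))\cdot\gamma'(\theta)$. The matrix $M(x)=\sum_i \nabla_S f_i(x)\nabla_S f_i(x)^T$ does not depend on the choice of orthonormal basis, so it is $O(d+1)$-equivariant. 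Since the stabilizer of $x$ acts irreducibly on $x^\perp$, this forces $M(x)=c\,P_{x^\perp}$ with
\[
c=\frac1d\sum_i\int_{S^d}\abs{\nabla_S f_i}^2\,d\sigma=\frac1d\sum_{k\le m}k(k+d-1)\dim\mathrm{Harm}_k .
\]
Note that this step uses no design property at all. Hence $\sum_i\frac1{2\pi}\int\abs{g_i'}^2=c\,(L/2\pi)^2$, which gives $L\ge 2\pi\sqrt{A_{t,d}/c}$. It remains to verify the routine binomial identity $c=B_{t,d}$, using $\dim\mathrm{Harm}_k=\binom{d+k}{d}-\binom{d+k-2}{d}$; I would do this by induction on $m$.

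For $t=2m+1$ the same scheme applies, with the main obstacle being the choice of an $N_{t,d}=2\binom{d+m}{d}$-dimensional space of functions on the circle. We need their Gram matrix to be controlled by the $(2m+1)$-design property, and we need their energy to be computable by invariance. The shape of $B_{t,d}$ (a term $\binom{d+m}{d}\cdot 1$ plus $\binom{d+m}{d}\cdot\frac{2m(d+m+1)}{d+2}$) suggests the following. Take $W=\mathrm{Hom}_m|_{S^d}$, of dimension $\binom{d+m}{d}$, together with a second copy built from $W$ and the curve's own geometry. Gram orthogonality between the two copies would then use only degree $\le 2m+1$ integrals.

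My first attempt would pair $g_i=f_i\circ\gamma$ with functions of the form $(x\cdot v)f_i$, or alternatively exploit odd/even parity on the circle, such as a half-period shift or antipodal doubling, in the spirit of the DGS antipodal argument. In each case the extra ``$1$'' per basis element would come from a degree-one factor contributing unit energy. If a clean choice is found, the rest is identical to the even case: Ky Fan on the circle, the equivariance computation of the gradient matrix, and a binomial identity to match $B_{t,d}$.
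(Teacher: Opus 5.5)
Your even-$t$ argument is correct and is essentially the paper's Case~I. The paper proves your Ky Fan step directly via Parseval, integration by parts, and a water-filling estimate, and your equivariance computation of $M(x)$ is its Fact~2.

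The odd case, however, has a genuine gap. You identify the right space $W=\mathcal{U}_m$ and correctly guess where the summand $\binom{d+m}{d}\cdot 1$ in $B_{2m+1,d}$ comes from, but you never produce the second family, and the candidates you name do not work. Take $\{f_i\}$ an orthonormal basis of $W$ and a single direction $v$, and set $h_i:=((x\cdot v)f_i)\circ\gamma$. These are orthogonal to the $g_j$, since the integrand $(x\cdot v)f_if_j$ is odd of degree $2m+1$. But their Gram entries $\frac{1}{2\pi}\int_0^{2\pi}(\gamma\cdot v)^2f_i(\gamma)f_j(\gamma)\,d\theta$ involve polynomials of degree $2m+2>t$, so the $h_i$ need not be orthonormal and Ky Fan does not apply. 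Moreover, $\sum_i|h_i'|^2=\dim W\,(\gamma'\cdot v)^2+c_W(\gamma\cdot v)^2\|\gamma'\|^2$, where $c_W:=\frac1d\sum_i\int_{S^d}|\nabla_S f_i|^2\,d\sigma$. This is not a constant multiple of $\|\gamma'\|^2$, so the energy is not computable by invariance. Half-period shifts and antipodal doubling presuppose symmetries that a general design curve need not have. The paper never finds an $N_{t,d}$-dimensional orthonormal system of the kind you are searching for; it sidesteps the need for one.

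The missing idea is to use all $d+1$ coordinate directions at once and to relax orthonormality to a Bessel condition. Keep $g_i:=f_i\circ\gamma$ and set $w_{\alpha i}:=(X_\alpha f_i)\circ\gamma$ for $\alpha=1,\dots,d+1$. Because $\sum_\alpha X_\alpha^2=1$ on $S^d$, you get $\sum_{\alpha,i}|\langle h,w_{\alpha i}\rangle_\circ|^2=\sum_\alpha\sum_i|\langle(X_\alpha\circ\gamma)h,g_i\rangle_\circ|^2\le\|h\|_\circ^2$ and $\sum_{\alpha,i}\|w_{\alpha i}\|_\circ^2=\sum_i\|g_i\|_\circ^2=\dim W$. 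Combined with $w_{\alpha i}\perp g_j$, the whole family of $(d+2)\dim W$ functions has Bessel bound $1$ and total squared norm $2\dim W=N_{2m+1,d}$.

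Your spectral step must then be upgraded from orthonormal families to this setting. If $\{F_\beta\}$ has Bessel bound $1$ and $\sum_\beta\|F_\beta\|_\circ^2=N$, then $\sum_\beta\|F_\beta'\|_\circ^2\ge\sum_{n\le N}\lambda_n$. This follows by water-filling, because $x_n:=\sum_\beta|\langle F_\beta,e_n\rangle_\circ|^2$ lies in $[0,1]$ and $\sum_n x_n=N$. Finally, $\gamma\cdot\gamma'=0$ kills the cross term, and together with $\|\gamma\|=1$ and the addition formula $\sum_i f_i^2\equiv\dim W$ it gives $\sum_{\alpha,i}|w_{\alpha i}'|^2=(\dim W+c_W)\|\gamma'\|^2$ exactly. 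That is your ``extra $1$ per basis element''. Adding $c_W\|\gamma'\|^2$ from the $g_i$ yields $B_{2m+1,d}$ after the binomial identity. Without this Bessel relaxation and the corresponding strengthening of Ky Fan, your proposal proves the theorem only for even $t$.
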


The proof is guided by the following idea.
(Here, we focus on the case where $t$ is even; the odd case is similar in spirit, but slightly more technical.)
Given a $t$-design curve, select a constant-speed parameterization
$\gamma\colon [0,2\pi)\to S^d$.
Identify a large family of mutually orthogonal low-degree polynomial functions $\{P_\alpha\}_\alpha$ over $S^d$, and then pull back these polynomials to form functions $\{P_\alpha\circ\gamma\}_\alpha$ over the circle.
Our bound follows from comparing two ways of measuring the total Dirichlet energy $\sum_\alpha \|(P_\alpha\circ\gamma)'\|_\circ^2$.
Here and throughout, $\operatorname{Leb}$ denotes Lebesgue measure and $\|\cdot\|_\circ$ denotes the norm in $L^2([0,2\pi),\frac{1}{2\pi}\operatorname{Leb})$.

What follows is a more detailed sketch.
First, since $\gamma$ is a $t$-design curve, the pullbacks are also mutually orthogonal, thereby forcing their Fourier support to occupy sufficiently many nonzero frequencies. 
Parseval's identity, integration by parts, and a water-filling estimate then deliver a lower bound on total energy in terms of the first few eigenvalues of the circle's Laplacian: 
\begin{equation}
\label{eq.main bound}
\sum_\alpha \|(P_\alpha\circ\gamma)'\|_\circ^2
\geq A_{t,d}.
\end{equation}
Alternatively, the total energy can be computed \textit{exactly} using the chain rule and the addition formula from the theory of spherical harmonics: 
\begin{equation}
\label{eq.main identity}
\sum_\alpha \|(P_\alpha\circ\gamma)'\|_\circ^2
=B_{t,d}\cdot\bigg(\frac{L(\gamma)}{2\pi}\bigg)^2.
\end{equation}
The result then follows from combining~\eqref{eq.main bound} with~\eqref{eq.main identity} and rearranging.

Recall that Delsarte, Goethals, and Seidel proved in~\cite{DelsarteGS:77} that every $t$-design in $S^d$ has cardinality at least $N_{t,d}$.
It perhaps comes as no surprise that we use the same spaces of spherical polynomials to prove our result. 
For an interpretable bound, Theorem~\ref{thm.lower bound} implies that every $t$-design curve in $S^d$ has length at least $c_d t^{d-1}$, where
\[
c_d
:=\frac{\pi}{2^{d-2}d!}\sqrt{\frac{d+2}{3}}.
\]
A lower bound with this scaling in $t$ previously appeared in Theorem~2.2 of~\cite{EhlerG:23} (repeated here as Theorem~\ref{thm:asymplength}), but not with an explicit constant.
It is conjectured that this lower bound is asymptotically sharp up to constants for each $d$, and to date, this is known to be true for $d\leq 3$; see~\cite{EhlerG:23,Lindblad:24}.
We note that there is an explicit lower bound on Marcinkiewicz--Zygmund curves of the form $C_d t^{d-1}$ reported in~\cite{EhlerG:25}, but when the upper and lower Marcinkiewicz--Zygmund bounds are equal, the resulting curve yields exact quadrature for various polynomials of degree $pt$, not just $t$, meaning the $C_d$ may be too large for a bound on $t$-design curves.

\begin{proof}[Proof of Theorem~\ref{thm.lower bound}]
Let $\gamma\colon[0,2\pi)\to S^d$ be a $t$-design curve with constant-speed parameterization.
As we will see, such a curve introduces interactions between the spectral theories of the circle and the sphere.
Let $\{\lambda_n\}_{n=1}^\infty$
denote the eigenvalues $0,1,1,4,4,9,9,\ldots$ of the circle's Laplacian
$-\frac{d^2}{d\theta^2}$, listed in monotonically increasing order with multiplicity.
Corresponding eigenfunctions are given by
\[
1,
~\cos\theta,
~\sin\theta,
~\cos2\theta,
~\sin2\theta,
~\ldots.
\]
Next, let $\{\lambda^{(k)}:=k(k+d-1)\}_{k=0}^\infty$ denote the eigenvalues of the spherical Laplacian $-\Delta_{S^d}$, listed \textit{without} multiplicity.
Here, $\lambda^{(k)}$ corresponds to the eigenspace $\mathcal{H}_k$, which consists of the spherical harmonics of degree $k$ in $S^d$.
For each $k$, we fix an $L^2(S^d,\sigma)$-orthonormal basis $\{Y^{(k)}_a\}_a$ of real polynomial functions for $\mathcal H_k$.

We will treat the parities of $t$ in cases, but first, we record two useful facts.

\medskip
\noindent
\textbf{Fact~1.}
Given a Bessel system $\{F_\alpha\}_\alpha$ in $L^2([0,2\pi),\frac{1}{2\pi}\operatorname{Leb})$ of Lipschitz functions with Bessel bound $1$ such that $\sum_\alpha \|F_\alpha\|_\circ^2=N$ for some positive integer $N$, it holds that
\[
\sum_\alpha \|F_\alpha'\|_\circ^2
\geq
\sum_{n=1}^N\lambda_n.
\]

Indeed, normalize eigenfunctions of the circle's Laplacian to form an orthonormal basis
$\{e_n\}_{n=1}^\infty$ for
$L^2([0,2\pi),\frac{1}{2\pi}\operatorname{Leb})$.
Parseval's identity and integration by parts together give
\[
\|F_\alpha'\|_\circ^2
=\sum_{n=1}^\infty |\langle F_\alpha',e_n\rangle_\circ|^2
=\sum_{n=1}^\infty |\langle F_\alpha,e_n'\rangle_\circ|^2
=\sum_{n=1}^\infty \lambda_n|\langle F_\alpha,e_n\rangle_\circ|^2.
\]
Summing both sides over $\alpha$ then gives
\[
\sum_\alpha \|F_\alpha'\|_\circ^2
=
\sum_{n=1}^\infty \lambda_n
\sum_\alpha |\langle F_\alpha,e_n\rangle_\circ|^2
=:
\sum_{n=1}^\infty \lambda_nx_n.
\]
Of course, each $x_n$ is nonnegative, and the Bessel bound gives
$x_n\leq 1$.
Meanwhile,
\[
\sum_{n=1}^\infty x_n
=
\sum_\alpha\sum_{n=1}^\infty |\langle F_\alpha,e_n\rangle_\circ|^2
=
\sum_\alpha\|F_\alpha\|_\circ^2
=
N.
\]
Since $\lambda_n$ is monotonically increasing in $n$, it follows that
$\sum_{n=1}^\infty\lambda_nx_n$ is minimized by taking
$x_1=\cdots=x_N=1$ and $x_n=0$ for each $n>N$.
Thus
\[
\sum_\alpha \|F_\alpha'\|_\circ^2
=
\sum_{n=1}^\infty \lambda_nx_n
\geq\sum_{n=1}^N\lambda_n,
\]
as claimed.

\medskip
\noindent
\textbf{Fact~2.}
For any $v\in S^d$, the projections of the vectors
$\{\nabla Y^{(k)}_a(v)\}_a$ onto the subspace $v^\perp$ form a tight frame for
$v^\perp$ with frame bound
\[
\frac{\lambda^{(k)}\operatorname{dim}\mathcal H_k}{d}.
\]

To be explicit, denoting $P_v=I-vv^\top$, we wish to show that
\[
S_v
:=
\sum_{a=1}^{\operatorname{dim}\mathcal H_k}
\langle\cdot,P_v\nabla Y^{(k)}_a(v)\rangle
P_v\nabla Y^{(k)}_a(v)
=
\frac{\lambda^{(k)}\operatorname{dim}\mathcal H_k}{d}P_v.
\]
First, we use symmetry to show that $S_v$ is a multiple of $P_v$.
Given any $R\in O(d+1)$ such that $Rv=v$, it holds that
$\{Y^{(k)}_a\circ R^{-1}\}_a$ is also an orthonormal basis for $\mathcal H_k$,
from which it follows that
\[
S_v
=
\sum_{a=1}^{\operatorname{dim}\mathcal H_k}
\langle\cdot,P_v\nabla(Y^{(k)}_a\circ R^{-1})(v)\rangle
P_v\nabla(Y^{(k)}_a\circ R^{-1})(v).
\]
Next, since
\[
P_v\nabla(Y^{(k)}_a\circ R^{-1})(v)
=
P_vR\nabla Y^{(k)}_a(v)
=
RP_v\nabla Y^{(k)}_a(v),
\]
we have $S_v=RS_vR^{-1}$.
Since $R$ was arbitrary, we conclude that $S_v=c_vP_v$ for some real number
$c_v$.
To determine $c_v$, we take the trace:
\[
dc_v
=
\operatorname{tr}(c_vP_v)
=
\operatorname{tr}(S_v)
=
\sum_{a=1}^{\operatorname{dim}\mathcal H_k}
\|P_v\nabla Y^{(k)}_a(v)\|^2.
\]
The right-hand side is rotation invariant, and so there is some $c\in\mathbb R$
such that $c_v=c$ for every $v\in S^d$.
We continue by averaging over $S^d$:
\[
dc
=
\int_{S^d}
\sum_{a=1}^{\operatorname{dim}\mathcal H_k}
\|P_x\nabla Y^{(k)}_a(x)\|^2\,d\sigma(x)
=
\sum_{a=1}^{\operatorname{dim}\mathcal H_k}
\int_{S^d}
\|P_x\nabla Y^{(k)}_a(x)\|^2\,d\sigma(x).
\]
Since $P_x\nabla Y^{(k)}_a(x)$ is the intrinsic gradient of
$Y^{(k)}_a|_{S^d}$, integration by parts gives
\[
dc
=
\sum_{a=1}^{\operatorname{dim}\mathcal H_k}
\int_{S^d}
Y^{(k)}_a(-\Delta_{S^d}Y^{(k)}_a)\,d\sigma
=
\lambda^{(k)}\operatorname{dim}\mathcal H_k.
\]
Thus $c=\frac{\lambda^{(k)}\operatorname{dim}\mathcal H_k}{d}$,
as claimed.

\medskip
\noindent
\textbf{Case I:} $t$ is even.
Write $t=2m$.
We first observe that the functions 
\[
f^{(k)}_a
:=Y^{(k)}_a\circ\gamma
\]
for $k\leq m$ are orthonormal in
$L^2([0,2\pi),\frac{1}{2\pi}\operatorname{Leb})$.
Indeed, note that 
\begin{itemize}
\item[(i)]
$\gamma$ is a constant-speed parameterization of the curve with speed
$\frac{1}{2\pi}L(\gamma)$ so that derivatives and the chain rule hold almost everywhere,
\item[(ii)]
$\gamma$ is a $t$-design curve and $Y^{(k)}_aY^{(\ell)}_b$ is a polynomial of
degree $k+\ell\leq 2m=t$, and
\item[(iii)]
$\{Y^{(k)}_a\}_{k,a}$ is orthonormal in $L^2(S^d,\sigma)$.
\end{itemize}
These facts imply the following equalities, respectively:
\[
\langle f^{(k)}_a,f^{(\ell)}_b\rangle_\circ
=
\frac{1}{L(\gamma)}\int_\gamma Y^{(k)}_aY^{(\ell)}_b\,ds
=
\int_{S^d}Y^{(k)}_aY^{(\ell)}_b\,d\sigma
=
\delta_{k,\ell}\delta_{a,b}.
\]
Thus $\{f^{(k)}_a\}_{k,a}$ is an orthonormal system, hence a
Bessel system with Bessel bound~$1$, and its total squared norm is
\[
\sum_{k=0}^m\sum_{a=1}^{\operatorname{dim}\mathcal{H}_k}\|f^{(k)}_a\|_\circ^2
=\sum_{k=0}^m\operatorname{dim}\mathcal{H}_k
=
\binom{d+m}{d}+\binom{d+m-1}{d}
=N_{2m,d}.
\]
It follows that
\[
\begin{aligned}
A_{2m,d}
&\leq
\sum_{k=0}^m
\sum_{a=1}^{\operatorname{dim}\mathcal H_k}
\| (f^{(k)}_a)' \|_\circ^2&&\textit{Fact~1}\\
&=
\sum_{k=0}^m
\sum_{a=1}^{\operatorname{dim}\mathcal H_k}
\frac{1}{2\pi}\int_0^{2\pi}
\bigg|
\frac{d}{d\theta}Y^{(k)}_a(\gamma(\theta))
\bigg|^2\,d\theta&&\textit{definition of \(\|\cdot\|_\circ\)}\\
&=
\sum_{k=0}^m
\frac{1}{2\pi}\int_0^{2\pi}
\sum_{a=1}^{\operatorname{dim}\mathcal H_k}
|\nabla Y^{(k)}_a(\gamma(\theta))\cdot\gamma'(\theta)|^2
\,d\theta\quad&&\textit{chain rule}\\
&=
\sum_{k=0}^m
\frac{1}{2\pi}\int_0^{2\pi}
\frac{\lambda^{(k)}\operatorname{dim}\mathcal H_k}{d}
\|\gamma'(\theta)\|^2\,d\theta&&\textit{Fact~2}\\
&=
\bigg(\frac{L(\gamma)}{2\pi}\bigg)^2\cdot
\frac{1}{d}
\sum_{k=0}^m
\lambda^{(k)}\operatorname{dim}\mathcal H_k.&&\textit{$\|\gamma'(\theta)\|=\frac{L(\gamma)}{2\pi}$}
\end{aligned}
\]
Recalling that
\[
\lambda^{(k)}=k(k+d-1),
\qquad
\operatorname{dim}\mathcal H_k
=
\binom{d+k}{d}-\binom{d+k-2}{d},
\]
a standard induction gives
\[
\frac{1}{d}
\sum_{k=0}^m
\lambda^{(k)}\operatorname{dim}\mathcal H_k
=
\frac{m(d+m)}{d+2}
\bigg(
\binom{d+m}{d}+\binom{d+m-1}{d}
\bigg)
=
B_{2m,d}.
\]
Thus,
\[
A_{2m,d}
\leq
\bigg(\frac{L(\gamma)}{2\pi}\bigg)^2\cdot B_{2m,d},
\]
which rearranges to the desired bound.

\medskip
\noindent
\textbf{Case II:} $t$ is odd.
Write $t=2m+1$, and let $\mathcal{U}_m$ denote the space of restrictions to $S^d$ of homogeneous polynomials of degree $m$ on $\mathbb{R}^{d+1}$.
Select an $L^2(S^d,\sigma)$-orthonormal basis
$\{P_i\}_i$ for $\mathcal{U}_m$, and denote the coordinate functions on
$S^d\subseteq\mathbb R^{d+1}$ by $X_1,\ldots,X_{d+1}$.
We use these to define functions over $[0,2\pi)$ by pulling back through $\gamma$:
\[
u_i:=P_i\circ\gamma,
\qquad
w_{\alpha i}:=(X_\alpha P_i)\circ\gamma.
\]

We claim that $\{u_i\}_{i}
\cup
\{w_{\alpha i}\}_{\alpha,i}$ is a Bessel system with Bessel bound~$1$ and total squared norm
$N_{2m+1,d}$.
First, the functions $\{u_i\}_i$ are orthonormal.
Indeed, $P_iP_j$ has degree at most $2m\leq t$, so the $t$-design condition gives
\[
\langle u_i,u_j\rangle_\circ
=
\int_{S^d}P_iP_j\,d\sigma
=
\delta_{i,j}.
\]
Second, the functions $w_{\alpha i}$ are orthogonal to every $u_j$.
Indeed, $P_i$ and $P_j$ have the same degree modulo $2$, and so $X_\alpha P_iP_j$ is odd
under $x\mapsto -x$.
Since $X_\alpha P_iP_j$ has degree at most $2m+1=t$, the design condition gives
\[
\langle w_{\alpha i},u_j\rangle_\circ
=
\int_{S^d}X_\alpha P_iP_j\,d\sigma
=
0.
\]
Next, for every $h\in L^2([0,2\pi),\frac{1}{2\pi}\operatorname{Leb})$,
\[
\sum_{\alpha=1}^{d+1}\sum_{i=1}^{\operatorname{dim}\mathcal{U}_m}
|\langle h,w_{\alpha i}\rangle_\circ|^2
=
\sum_{\alpha=1}^{d+1}\sum_{i=1}^{\operatorname{dim}\mathcal{U}_m}
|\langle (X_\alpha\circ\gamma)h,u_i\rangle_\circ|^2
\leq
\sum_{\alpha=1}^{d+1}
\|(X_\alpha\circ\gamma)h\|_\circ^2
=
\|h\|_\circ^2,
\]
where the last step uses the fact that $\sum_{\alpha=1}^{d+1}X_\alpha^2=1$ on $S^d$.
Since the span of the $w_{\alpha i}$'s is orthogonal to the span of the
$u_i$'s, adding the orthonormal system $\{u_i\}_i$ preserves the Bessel bound
$1$.
Finally, $\sum_i\|u_i\|_\circ^2=\operatorname{dim}\mathcal{U}_m$,
and
\begin{multline*}
\sum_{\alpha=1}^{d+1}\sum_{i=1}^{\operatorname{dim}\mathcal{U}_m}\|w_{\alpha i}\|_\circ^2
=
\frac{1}{2\pi}\int_0^{2\pi}
\sum_{\alpha=1}^{d+1}\sum_{i=1}^{\operatorname{dim}\mathcal{U}_m}
X_\alpha(\gamma(\theta))^2P_i(\gamma(\theta))^2
\,d\theta\\
=
\frac{1}{2\pi}\int_0^{2\pi}
\sum_{i=1}^{\operatorname{dim}\mathcal{U}_m}P_i(\gamma(\theta))^2\,d\theta
=
\sum_{i=1}^{\operatorname{dim}\mathcal{U}_m}\|u_i\|_\circ^2
=
\operatorname{dim}\mathcal{U}_m.
\end{multline*}
Thus, the total squared norm is 
\[
\sum_{i=1}^{\operatorname{dim}\mathcal{U}_m}\|u_i\|_\circ^2+\sum_{\alpha=1}^{d+1}\sum_{i=1}^{\operatorname{dim}\mathcal{U}_m}\|w_{\alpha i}\|_\circ^2
=
2\operatorname{dim}\mathcal{U}_m
=2\binom{d+m}{d}
=N_{2m+1,d},
\]
where the second step follows from the decomposition
\begin{equation}
\label{eq.decomposition into harmonics}
\mathcal{U}_m
=
\bigoplus_{\substack{0\leq k\leq m\\ k\equiv m\bmod 2}}
\mathcal H_k.
\end{equation}
This will allow us to use Fact~1 in what follows.

Next, we compute the total Dirichlet energy
\[
\sum_{i=1}^{\operatorname{dim}\mathcal{U}_m}\|u_i'\|_\circ^2
+
\sum_{\alpha=1}^{d+1}\sum_{i=1}^{\operatorname{dim}\mathcal{U}_m}\|w_{\alpha i}'\|_\circ^2.
\]
First, the chain rule, the decomposition~\eqref{eq.decomposition into harmonics}, and Fact~2 together give
\[
\sum_{i=1}^{\operatorname{dim}\mathcal{U}_m} |u_i'(\theta)|^2
=
\sum_{i=1}^{\operatorname{dim}\mathcal{U}_m} |\nabla P_i(\gamma(\theta))\cdot \gamma'(\theta)|^2
=
C_{m,d}\|\gamma'(\theta)\|^2,
\]
where
\[
C_{m,d}
:=
\frac{1}{d}
\sum_{\substack{1\leq k\leq m\\ k\equiv m\bmod 2}}
\lambda^{(k)}\operatorname{dim}\mathcal H_k.
\]
Next, recall from the addition formula that
\begin{equation}
\label{eq.add}
\sum_{i=1}^{\operatorname{dim}\mathcal{U}_m} P_i(v)^2={\operatorname{dim}\mathcal{U}_m}
\qquad
\forall\,v\in S^d.
\end{equation}
Thus, suppressing dependence on $\theta$, we have
\[
\begin{aligned}
\sum_{\alpha=1}^{d+1}\sum_{i=1}^{\operatorname{dim}\mathcal{U}_m} |w_{\alpha i}'|^2
&=
\sum_{\alpha=1}^{d+1}\sum_{i=1}^{\operatorname{dim}\mathcal{U}_m}
|\nabla(X_\alpha P_i)(\gamma)\cdot \gamma'|^2
&&\textit{chain rule}\\
&=
\sum_{\alpha=1}^{d+1}\sum_{i=1}^{\operatorname{dim}\mathcal{U}_m}
\left|
\gamma'_\alpha P_i(\gamma)
+
\gamma_\alpha(\nabla P_i(\gamma)\cdot \gamma')
\right|^2\quad
&&\textit{product rule}\\
&=
\sum_{i=1}^{\operatorname{dim}\mathcal{U}_m}
\big(
\|\gamma'\|^2P_i(\gamma)^2
+
|\nabla P_i(\gamma)\cdot \gamma'|^2
\big)
&&\textit{\(\gamma\cdot\gamma'=0\), \(\|\gamma\|=1\)}\\
&=
(\operatorname{dim}\mathcal{U}_m+C_{m,d})\|\gamma'\|^2.
&&\textit{Eq.\ \eqref{eq.add}, Fact~2}
\end{aligned}
\]
Combining the identities above, we then have
\[
\begin{aligned}
A_{2m+1,d}
&\leq
\sum_{i=1}^{\operatorname{dim}\mathcal{U}_m}\|u_i'\|_\circ^2
+
\sum_{\alpha=1}^{d+1}\sum_{i=1}^{\operatorname{dim}\mathcal{U}_m}\|w_{\alpha i}'\|_\circ^2
&&\textit{Fact~1}\\
&=
\frac{1}{2\pi}\int_0^{2\pi}
\bigg(
\sum_{i=1}^{\operatorname{dim}\mathcal{U}_m} |u_i'(\theta)|^2
+
\sum_{\alpha=1}^{d+1}\sum_{i=1}^{\operatorname{dim}\mathcal{U}_m} |w_{\alpha i}'(\theta)|^2
\bigg)\,d\theta\quad
&&\textit{definition of \(\|\cdot\|_\circ\)}\\
&=
\frac{1}{2\pi}\int_0^{2\pi}
(\operatorname{dim}\mathcal{U}_m+2C_{m,d})\|\gamma'(\theta)\|^2\,d\theta&&\textit{identities above}\\
&=
\bigg(\frac{L(\gamma)}{2\pi}\bigg)^2\cdot
(\operatorname{dim}\mathcal{U}_m+2C_{m,d}).
&&\textit{\(\|\gamma'(\theta)\|=\frac{L(\gamma)}{2\pi}\)}
\end{aligned}
\]
Finally, a standard induction argument gives
\[
C_{m,d}
=
\frac{m(d+m+1)}{d+2}\binom{d+m}{d},
\]
and so
\[
\operatorname{dim}\mathcal{U}_m+2C_{m,d}
=
\binom{d+m}{d}
\bigg(
1+\frac{2m(d+m+1)}{d+2}
\bigg)
=
B_{2m+1,d}.
\]
Thus,
\[
A_{2m+1,d}
\leq
\bigg(\frac{L(\gamma)}{2\pi}\bigg)^2\cdot B_{2m+1,d},
\]
which rearranges to the desired bound.
\end{proof}

\begin{ex}
\label{ex.great circle}
In the case where $t=1$, Theorem~\ref{thm.lower bound} reports that every $1$-design curve has length at least $2\pi$, which in turn implies that every great circle is a shortest $1$-design curve.
This is to be expected, and in fact, \cite{EhlerGK:25} states ``it is easy to show that great circles are shortest among all $1$-design curves.''
At the risk of beating a dead horse, we include yet another proof of this bound in the appendix; this proof, originally due to Stefan Steinerberger, makes use of classical ideas from integral geometry.
\end{ex}

\begin{ex}
\label{ex.opt for t=2 and d odd}
By inspecting the proof of the even-$t$ case of Theorem~\ref{thm.lower bound}, a closed curve with constant-speed parameterization $\gamma\colon[0,2\pi)\to S^{d}$ achieves equality in the lower bound when the functions $\{Y^{(k)}_a\circ \gamma\}_{k,a}$ form an orthonormal basis of bottom eigenfunctions of the circle's Laplacian $-\frac{d^2}{d\theta^2}$.
This is particularly plausible when $t=2$, since then we only need $\{X_a\circ \gamma\}_{a=1}^{d+1}$ to equal the bottom nonconstant eigenfunctions, where $X_a$ denotes the $a$th coordinate function.
When $d=2k-1$, this corresponds to taking
\[
\gamma(\theta)
\,:=\,\frac{1}{\sqrt{k}}\big(
\cos\theta,
~\sin\theta,
~\cos2\theta,
~\sin2\theta,
~\cdots,
~\cos k\theta,
~\sin k\theta
\,\big).
\]
This was first identified as a $2$-design curve for $S^3$ in Section~3 of~\cite{Lindblad:24}, where it was constructed using the Hopf map to lift a $1$-design curve (i.e., the great circle in the $xy$-plane) in $S^2$.
By the above discussion, $1$-design curve which is lifted and the resulting $2$-design curve are both of minimal length.
Example~2.2 of~\cite{EhlerGK:25} contains the formula for the curve as a $2$-design curve in all odd-dimensional spheres.

This curve has another name in polytope theory: the \emph{trigonometric moment curve}.  
Sampling \emph{any distinct} $n$ vertices of the moment curve in $S^{2k-1}$ and taking the convex hull results in a (not necessarily regular) polytope which is \emph{$k$-neighborly}, in some sense maximizing the maximum number of faces a polytope with $n$ vertices in $\bR^{2k}$ can have~\cite{Caratheodory:11,Gale:63}.

If we instead sample the curve at \emph{any} uniformly spaced (relative to $\bR/2\pi \bZ$) set of $2k+1$ points (rather than just the single set tested in Example~2.2 of~\cite{EhlerGK:25}), we will obtain a regular $2k$-simplex.
That is, this spherical $2$-design curve of minimal length is a smooth sweeping of tight spherical $2$-designs.
\end{ex}

\section{Ansatz for $t=2$, $d$ even}\label{sec:ans}

The previous section presented a general lower bound on the length of $t$-design curves in $S^d$, and then showed that equality is achieved when $t=1$ or when $t=2$ and $d$ is odd.
The remainder of the paper focuses on the case where $t=2$ and $d$ is even. 
What follows is an ansatz for $2$-design curves in even-dimensional spheres that we suspect is satisfied by the shortest of such curves.

\begin{lemma}
\label{lem.ansatz}
Given a positive integer $k$, consider any $\frac{\pi}{k+1}$-periodic $x_1,\ldots,x_k\in C^1(\mathbb{R})$ and any $\frac{\pi}{k+1}$-antiperiodic\footnote{A function $f\colon \bR \rar \bR$ is said to be \emph{$p$-antiperiodic} if $f(x+p)=-f(x)$ for all $x \in \bR$.} $z\in C^1(\mathbb{R})$ such that
\begin{equation}
\label{eq.norm condition}
\sum_{j=1}^kx_j(\theta)^2+z(\theta)^2
=1
\end{equation}
for every $\theta\in\mathbb{R}$ and
\begin{equation}
\label{eq.2 design condition}
\int_0^{2\pi}\bigg(x_j(\theta)^2-\frac{2}{2k+1}\bigg)\sqrt{\sum_{\ell=1}^k \big(x_\ell'(\theta)^2+\ell^2x_\ell(\theta)^2\big)+z'(\theta)^2}\,d\theta
=0
\end{equation}
for each $j\in\{1,\ldots,k\}$.
Then $\gamma\colon[0,2\pi)\to S^{2k}$ defined by
\begin{align*}
\gamma(\theta)
\,&:=\,\big(
\,x_1(\theta)\cos\theta,
~x_1(\theta)\sin\theta,
~x_2(\theta)\cos2\theta,
~x_2(\theta)\sin2\theta,
~\cdots\\
&\hspace{1.6in}
~\cdots,
~x_k(\theta)\cos k\theta,
~x_k(\theta)\sin k\theta,
~z(\theta)
\,\big)
\end{align*}
is a $2$-design curve.
\end{lemma}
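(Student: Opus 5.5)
The plan is to verify the $2$-design condition directly, monomial by monomial, using a Fourier-support argument based on the (anti)periodicity of the data. First I would record the routine facts. The identity $2(k+1)\cdot\frac{\pi}{k+1}=2\pi$ shows that every ingredient is $2\pi$-periodic, so $\gamma$ is a closed Lipschitz ($C^1$) curve. The norm condition~\eqref{eq.norm condition} gives $\|\gamma(\theta)\|^2=\sum_j x_j^2+z^2=1$, so $\gamma$ lies on $S^{2k}$. Next I would compute the speed. Differentiating the pair $(x_\ell\cos\ell\theta,\,x_\ell\sin\ell\theta)$ gives squared norm $x_\ell'^2+\ell^2x_\ell^2$, because the cross terms cancel. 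Hence
\[
w(\theta):=\|\gamma'(\theta)\|=\sqrt{\sum_{\ell=1}^k\big(x_\ell'(\theta)^2+\ell^2x_\ell(\theta)^2\big)+z'(\theta)^2},
\]
which is exactly the weight appearing in~\eqref{eq.2 design condition}. Since $x_\ell$ and $x_\ell'$ are $p$-periodic and $z'$ is $p$-antiperiodic (so $z'^2$ is $p$-periodic), where $p=\frac{\pi}{k+1}$, the weight $w$ is $p$-periodic. I assume $L(\gamma)>0$, which is needed for the definition to make sense.

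The main tool is the following parity principle. A $p$-periodic integrable function has Fourier support in $2(k+1)\mathbb{Z}$. A $p$-antiperiodic function has Fourier support in $(k+1)(2\mathbb{Z}+1)$. Consequently, if $g$ has one of these forms and $1\le n\le 2k$, then $\int_0^{2\pi}g(\theta)e^{\pm in\theta}\,d\theta=0$. In the antiperiodic case this also holds for $1\le n\le k$, since $|(k+1)(2r+1)|\geq k+1>n$. It then suffices to check that $\frac{1}{L}\int_0^{2\pi}f(\gamma)w\,d\theta=\int_{S^d}f\,d\sigma$ for $f=1$, for every coordinate $X_a$ (target $0$), and for every product $X_aX_b$ (target $\delta_{ab}/(2k+1)$).

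I would go through these cases in order.
\begin{itemize}
\item \emph{Linear, planar coordinates.} For $x_j\cos j\theta$ or $x_j\sin j\theta$, the function $x_jw$ is $p$-periodic and is paired with frequency $j\in[1,k]$, so the integral vanishes.
\item \emph{Linear, last coordinate.} The function $zw$ is antiperiodic with frequency $0$, so its integral vanishes.
\item \emph{Quadratic, distinct planes $j\neq\ell$.} Here $x_jx_\ell w$ is periodic, and the trigonometric products have frequencies $j\pm\ell$, which are nonzero with absolute value at most $2k$. These integrals vanish.
\item \emph{Quadratic, same plane.} The products $\cos^2 j\theta=\tfrac12(1+\cos2j\theta)$ and $\sin^2 j\theta=\tfrac12(1-\cos2j\theta)$ reduce to $\tfrac12\int x_j^2w$, because $2j\le 2k$ kills the oscillating part. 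The mixed term $\cos j\theta\sin j\theta=\tfrac12\sin 2j\theta$ gives $0$. Condition~\eqref{eq.2 design condition} says exactly that $\int x_j^2w=\frac{2}{2k+1}\int w$, which yields the value $\frac{1}{2k+1}L$ for each diagonal entry.
\item \emph{Mixed with the last coordinate.} For $x_jz\cos j\theta$ and $x_jz\sin j\theta$, the function $x_jzw$ is antiperiodic and the frequency is $j\le k$, so these integrals vanish.
\item \emph{The square of the last coordinate.} I would use~\eqref{eq.norm condition}: $\int z^2w=\int w-\sum_j\int x_j^2w=\big(1-\frac{2k}{2k+1}\big)L=\frac{L}{2k+1}$.
\end{itemize}
These values match the sphere moments $\int X_a\,d\sigma=0$ and $\int X_aX_b\,d\sigma=\frac{\delta_{ab}}{2k+1}$ on $S^{2k}$, which completes the argument.

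The main obstacle is bookkeeping rather than depth. One must make sure that every frequency bound is sharp enough: $2k<2(k+1)$ for the periodic products, and $k<k+1$ for the antiperiodic ones. One must also note that $w$ is merely continuous and not smooth, but the Fourier-support claim only uses the translation identity $g(\theta+p)=\pm g(\theta)$. That identity gives $\hat g(n)(1\mp e^{inp})=0$ directly, with no regularity needed beyond integrability.
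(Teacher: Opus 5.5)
Your proposal is correct and follows essentially the paper's argument: the Fourier-support dichotomy ($2(k+1)\mathbb{Z}$ for $\frac{\pi}{k+1}$-periodic functions such as $\|\gamma'\|$, $(k+1)(2\mathbb{Z}+1)$ for antiperiodic ones) kills all linear and off-diagonal moments, the planar diagonal entries reduce to half of \eqref{eq.2 design condition} (you via $\cos^2 j\theta=\frac12(1+\cos 2j\theta)$, the paper by showing the $\cos^2$ and $\sin^2$ entries have equal integrals), and the $z^2$ entry follows from \eqref{eq.norm condition}. Two cosmetic fixes: your first range statement holds only for periodic $g$ (e.g.\ $\sin(k+1)\theta$ is antiperiodic with a nonzero coefficient at $n=k+1\le 2k$), and the antiperiodic range you actually use is $0\le n\le k$; also, $L(\gamma)>0$ need not be assumed, since $\|\gamma'\|\equiv 0$ would force every $x_\ell\equiv 0$, hence $z\equiv\pm1$, contradicting antiperiodicity.
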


\begin{proof}
First, \eqref{eq.norm condition} ensures that $\gamma$ is well defined in the sense that $S^{2k}$ is its codomain.
Next, since each $x_j$ and $z$ are $C^1$ and $2\pi$-periodic, it follows that the image of $\gamma$ is a rectifiable closed curve.
As we will see, $\gamma$ is a $2$-design thanks to the periodicity of each~$x_j$, the antiperiodicity of~$z$, and the specification~\eqref{eq.2 design condition}.

Observe that
\[
\|\gamma'(\theta)\|
=\sqrt{\sum_{j=1}^k \big(x_j'(\theta)^2+j^2x_j(\theta)^2\big)+z'(\theta)^2}\,.
\]
Since each $x_j$ and $z$ are $C^1$ and $\frac{\pi}{k+1}$-(anti)periodic, it holds that $\theta\mapsto\|\gamma'(\theta)\|$ is continuous and $\frac{\pi}{k+1}$-periodic, and so its Fourier coefficients are supported on $2(k+1)\mathbb{Z}$.
Meanwhile, for each $j\in\{1,\ldots,k\}$, the maps $\theta\mapsto x_j(\theta)\cos j\theta$ and $\theta\mapsto x_j(\theta)\sin j\theta$ have Fourier series supported on $\pm j+2(k+1)\mathbb{Z}$, which is disjoint from $2(k+1)\mathbb{Z}$, and so
\[
\int_0^{2\pi}x_j(\theta)\cos j\theta\,\|\gamma'(\theta)\|\,d\theta=0,
\qquad
\int_0^{2\pi}x_j(\theta)\sin j\theta\,\|\gamma'(\theta)\|\,d\theta=0.
\]
Similarly, since $z$ is $\frac{\pi}{k+1}$-antiperiodic, its Fourier series is supported on $(k+1)+2(k+1)\mathbb{Z}$, and so
\[
\int_0^{2\pi}z(\theta)\,\|\gamma'(\theta)\|\,d\theta=0.
\]
At this point, we have established that $\gamma$ is a $1$-design curve.

It remains to show that
\begin{equation}
\label{eq.frame operator}
\int_0^{2\pi}\bigg(\gamma(\theta)\gamma(\theta)^\top-\frac{1}{2k+1}I\bigg)\,\|\gamma'(\theta)\|\,d\theta
=0.
\end{equation}
The off-diagonal entries of \eqref{eq.frame operator} can be treated by the same analysis as above: when $i\neq j$, the map $\theta\mapsto\gamma_i(\theta)\gamma_j(\theta)$ has a Fourier series whose support is disjoint from $2(k+1)\mathbb{Z}$.
For the diagonal entries of \eqref{eq.frame operator}, we first observe that for each $j\in\{1,\ldots,k\}$, the maps $\theta\mapsto x_j(\theta)^2\cos^2j\theta-\frac{1}{2k+1}$ and $\theta\mapsto x_j(\theta)^2\sin^2j\theta-\frac{1}{2k+1}$ have Fourier series supported on $\{-2j,0,2j\}+2(k+1)\mathbb{Z}$.
This intersects $2(k+1)\mathbb{Z}$ only at the coset $0+2(k+1)\mathbb{Z}$, where our two maps have identical Fourier coefficients.
It follows that
\[
\int_0^{2\pi}\bigg(x_j(\theta)^2\cos^2j\theta-\frac{1}{2k+1}\bigg)\,\|\gamma'(\theta)\|\,d\theta
=\int_0^{2\pi}\bigg(x_j(\theta)^2\sin^2j\theta-\frac{1}{2k+1}\bigg)\,\|\gamma'(\theta)\|\,d\theta.
\]
In particular, both sides of the above identity equal their average, i.e., half of \eqref{eq.2 design condition}.
This treats all but the last diagonal entry of \eqref{eq.frame operator}.
For this final entry, we apply \eqref{eq.norm condition} to get
\begin{align*}
\int_0^{2\pi}\bigg(z(\theta)^2-\frac{1}{2k+1}\bigg)\,\|\gamma'(\theta)\|\,d\theta
&=\int_0^{2\pi}\bigg(1-\sum_{j=1}^kx_j(\theta)^2-\frac{1}{2k+1}\bigg)\,\|\gamma'(\theta)\|\,d\theta\\
&=-\sum_{j=1}^k\int_0^{2\pi}\bigg(x_j(\theta)^2-\frac{2}{2k+1}\bigg)\,\|\gamma'(\theta)\|\,d\theta,
\end{align*}
which vanishes by \eqref{eq.2 design condition}.
\end{proof}

The following remark identifies an important precursor to the above ansatz.

\begin{rem}
The previous literature contains $2$-design curves in $S^2$ that resemble the seam of a tennis ball.
Given any tennis ball curve $\gamma$ in $S^2$, let $P$ denote the $2$-dimensional subspace held invariant by the $D_4$ symmetry of $\gamma$.
Change coordinates so that $P$ is the $xy$-plane, and suppose the projection of $\gamma$ onto the $xy$-plane avoids the origin and that its composition with the argument map $(r\cos\theta,r\sin\theta)\mapsto\theta$ is injective.
This injectivity allows us to reparameterize $\gamma$ by $\theta$.
Let $x_1(\theta)$ denote the distance from the origin after projection, and let $z(\theta)$ denote the $z$-coordinate of $\gamma(\theta)$.
Then $\gamma(\theta)=(x_1(\theta)\cos\theta,x_1(\theta)\sin\theta,z(\theta))$, where $x_1$ and $z$ satisfy the hypotheses of Lemma~\ref{lem.ansatz}, except possibly~\eqref{eq.2 design condition}, which is satisfied if and only if $\gamma$ is a $2$-design curve.
In this way, the $2$-design curve in $S^2$ illustrated in Figure~1 of~\cite{EhlerG:23} can be reparameterized to satisfy the ansatz of Lemma~\ref{lem.ansatz}.
\end{rem}

What follows is another important instance of our general ansatz, which will help us evaluate our lower bound in the setting where $t=2$ and $d$ is even and large.

\begin{thm}
\label{thm.sine ansatz}
For every positive integer $k$, there exists $a_k\in(0,1)$ such that
\[
x_1(\theta)
=\cdots
=x_k(\theta)
=\sqrt{\frac{1-z(\theta)^2}{k}},
\qquad
z(\theta)=a_k\sin(k+1)\theta
\]
satisfy the hypotheses of Lemma~\ref{lem.ansatz}.
In particular, 
\[
a_1^2\in(0.820,0.821)
\qquad\text{and}\qquad
a_k^2\in\big(\tfrac{1}{k}-\tfrac{1}{2k^3},\tfrac{1}{k}+\tfrac{1}{4k^3}\big)
\quad
\forall\,k\geq2.
\]
\end{thm}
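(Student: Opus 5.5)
The plan is to check the hypotheses of Lemma~\ref{lem.ansatz} directly. Every hypothesis except \eqref{eq.2 design condition} holds automatically for any $a\in(0,1)$. Because all the $x_j$ are equal, \eqref{eq.2 design condition} collapses to a single scalar equation $F_k(a)=0$, which I will solve with the intermediate value theorem and then localize.

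\textbf{Step 1 (automatic hypotheses).} Fix $a\in(0,1)$ and put $z=a\sin(k+1)\theta$. Then $z(\theta+\frac{\pi}{k+1})=-z(\theta)$, so $z$ is $\frac{\pi}{k+1}$-antiperiodic. The function $1-z^2\geq 1-a^2>0$ is smooth and $\frac{\pi}{k+1}$-periodic, so each $x_j=\sqrt{(1-z^2)/k}$ is $C^1$ and $\frac{\pi}{k+1}$-periodic. The norm condition \eqref{eq.norm condition} is immediate: $k\cdot\frac{1-z^2}{k}+z^2=1$.

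\textbf{Step 2 (reduce to one equation).} Compute the speed. From $x_j'=-zz'/\sqrt{k(1-z^2)}$ we get $\sum_j x_j'^2=z^2z'^2/(1-z^2)$. Also $\sum_\ell \ell^2x_\ell^2=c_k(1-z^2)$ with $c_k=\frac{(k+1)(2k+1)}{6}$. Hence
\[
\|\gamma'\|^2=\frac{z'^2}{1-z^2}+c_k(1-z^2).
\]
Substitute $\varphi=(k+1)\theta$ and $s=\sin\varphi$; the integrals over a full period reduce to integrals over $\varphi\in[0,2\pi)$. All $k$ conditions \eqref{eq.2 design condition} are then the single equation
\[
F_k(a):=\int_0^{2\pi}\Big(\tfrac{1}{2k+1}-a^2\sin^2\varphi\Big)\,w_a(\varphi)\,d\varphi=0,
\qquad
w_a=\sqrt{\frac{a^2(k+1)^2\cos^2\varphi}{1-a^2\sin^2\varphi}+c_k(1-a^2\sin^2\varphi)}.
\]

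\textbf{Step 3 (existence).} $F_k$ is continuous on $[0,1]$; at $a=1$ the weight extends continuously to $w_1=\sqrt{(k+1)^2+c_k\cos^2\varphi}$.
\begin{itemize}
\item $F_k(0)>0$ trivially.
\item At $a=1$, write $w_1=\sqrt{(k+1)^2+c_k\cos^2\varphi}$. Since $c_k/(k+1)^2<\frac13$, the ratio $\max w_1/\min w_1$ is at most $\sqrt{4/3}$. So the $w_1$-weighted average of $\sin^2\varphi$ is at least about $\frac{1}{2}\cdot\frac{\sqrt3}{2}>\frac13\geq\frac{1}{2k+1}$, giving $F_k(1)<0$.
\end{itemize}
The intermediate value theorem then gives a root $a_k\in(0,1)$.

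\textbf{Step 4 (quantitative bounds, the main obstacle).} For large $k$ the heuristic is clear. Near $a^2\approx 1/k$ the term $c_k(1-a^2s^2)\sim k^2/3$ dominates the term $a^2(k+1)^2\cos^2\varphi/(1-a^2s^2)\sim k$. So $w_a$ is nearly constant, the weighted mean of $\sin^2$ is nearly $\frac12$, and $a_k^2\approx\frac{2}{2k+1}\approx\frac1k$.

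To make this rigorous I would write $w_a=\sqrt{c_k}\,(1+\epsilon)^{1/2}$ with $\epsilon=O(1/k)$. I would then use the two-sided bounds $1+\frac\epsilon2-\frac{\epsilon^2}{8}\leq\sqrt{1+\epsilon}\leq1+\frac\epsilon2$, and also expand $(1-a^2s^2)^{\pm1}$. Integrating trigonometric polynomials exactly then gives explicit upper and lower estimates for $F_k(a)$. Evaluating these at $a^2=\frac1k-\frac{1}{2k^3}$ and $a^2=\frac1k+\frac{1}{4k^3}$, I expect them to show $F_k>0$ at the first point and $F_k<0$ at the second.

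The delicate part is keeping the error terms below $O(k^{-3})$ uniformly for all $k\geq2$. Small $k$ may need to be checked separately with rigorous numerical quadrature. That same approach handles $k=1$: evaluate $F_1$ at $a^2=0.820$ and at $a^2=0.821$ with certified quadrature and observe the sign change. Uniqueness of the root is not claimed, so no monotonicity argument is needed, only these sign changes.
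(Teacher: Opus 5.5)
Steps 1--3 are correct. Your $F_k(a)$ equals $k\,H_k(a^2)$ in the paper's notation. Your intermediate value argument on $[0,1]$ is a valid, self-contained existence proof; continuity at $a=1$ comes from dominated convergence, using $1-a^2\sin^2\varphi\ge\cos^2\varphi$ to bound the integrand, not from continuity of $w_a$, which fails at $\varphi=\frac\pi2$. The paper does not need this step, since it gets existence from the sign change at the localized endpoints.

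The genuine gap is Step~4. It carries the entire quantitative claim, but it is only an expectation, and the tools you name would not deliver it. Put $q=a^2=\frac1k+\frac{c}{k^3}$ and $g=\frac1{2k+1}-q\sin^2\varphi$, so that $F_k=\sqrt{c_k}\int_0^{2\pi}g\sqrt{1+\epsilon}\,d\varphi$ with $\epsilon=\frac{3-4\sin^2\varphi}{k}+O(k^{-2})$. The $\epsilon$-independent piece $\sqrt{c_k}\int g=-\frac{\pi}{2\sqrt3\,k}+\cdots$ is cancelled exactly by the linear piece $\frac{\sqrt{c_k}}2\int g\epsilon=+\frac{\pi}{2\sqrt3\,k}+\cdots$. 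Hence the sign at your two test points is decided only at order $k^{-2}$:
\[
F_k=-\frac{\pi}{\sqrt3}\Big(c+\frac1{16}\Big)k^{-2}+O(k^{-3}),
\]
which is $k$ times the paper's main term $-\frac{\sqrt3\pi(16c+1)}{48k^3}$. In this coefficient the $-\epsilon^2/8$ term alone contributes $-\frac{\pi}{4\sqrt3}$, four times the $c=0$ value. The $O(k^{-2})$ part of $\epsilon$, coming from $(1-q\sin^2\varphi)^{-1}$ and $\frac{6(k+1)}{2k+1}$, contributes $+\frac{3\pi}{16\sqrt3}$.

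This creates three problems.
\begin{itemize}
\item Your sandwich $1+\frac\epsilon2-\frac{\epsilon^2}8\le\sqrt{1+\epsilon}\le1+\frac\epsilon2$ has a gap of exactly the signal's order. Since $g$ changes sign, bounding $F_k$ from above forces you to use $1+\frac\epsilon2$ on $\{g>0\}$. This discards $\frac{\sqrt{c_k}}8\int_{\{g>0\}}g\epsilon^2\approx\frac{1+\pi+8/3}{8\sqrt3}k^{-2}\approx0.49\,k^{-2}$. The signal at $c=\frac14$ is only $\frac{5\pi}{16\sqrt3}k^{-2}\approx0.57\,k^{-2}$, so roughly $0.08\,k^{-2}$ is left to absorb every other error.
\item The left inequality is false whenever $\epsilon<0$, because the cubic Taylor remainder is then negative. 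This case occurs: $\epsilon=-a^2$ at $\varphi=\frac\pi2$.
\item Your heuristic $a_k^2\approx\frac2{2k+1}$ is a symptom of the same issue. Since $\frac1k-\frac2{2k+1}=\frac1{k(2k+1)}>\frac1{2k^3}$ for all $k\ge2$, that value lies outside the interval you want to prove. The variation of $w_a$ shifts the root at order $k^{-2}$.
\end{itemize}

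To close the gap, proceed as follows.
\begin{itemize}
\item Expand $\sqrt{c_k}$, $\frac1{2k+1}$, $(1-q\sin^2\varphi)^{-1}$ and $\sqrt{1+\epsilon}$ exactly through second order, keeping only a cubic remainder such as $|\sqrt{1+\epsilon}-1-\frac\epsilon2+\frac{\epsilon^2}8|\le C|\epsilon|^3$ on the relevant range.
\item Check that the orders $k^0$ and $k^{-1}$ integrate to zero, and compute the $k^{-2}$ coefficient $-\frac{\pi}{\sqrt3}(c+\frac1{16})$.
\item Bound the remainder by an explicit $C'k^{-3}$. This gives the right signs at $c=-\frac12$ and $c=\frac14$ beyond an explicit threshold in $k$, and the finitely many remaining $k$ (and $k=1$) are certified numerically.
\end{itemize}
The paper does this in one stroke. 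It Taylor-expands the whole integrand in $x=1/k$ with $q=x+cx^3$ to third order. It then certifies $|E(x,c)|<x^4$ by interval arithmetic on the box $0<x\le\frac12$, $-\frac12\le c\le\frac14$. Treating $x$ as a continuous variable covers every $k\ge2$ at once, with no separate small-$k$ checks.
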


\begin{proof}
Note that $x_1,\ldots,x_k$, and $z$ satisfy all of the hypotheses of Lemma~\ref{lem.ansatz} except possibly~\eqref{eq.2 design condition}.
For this final constraint, we put $q:=a^2$ and change variables $u:=(k+1)\theta$ so that it suffices for the following quantity to vanish:
\[
H_k(q)
:=
\int_0^{2\pi}
\big(
\tfrac{1-q\sin^2u}{k}
-\tfrac{2}{2k+1}
\big)
\sqrt{
\tfrac{(k+1)(2k+1)}{6}\big(1-q\sin^2u\big)
+\tfrac{q(k+1)^2\cos^2u}{1-q\sin^2u}
}\,du.
\]
When $k=1$, one may use interval arithmetic to obtain
\[
H_1(0.820)>0
\qquad\text{and}\qquad
H_1(0.821)<0,
\]
and so the desired bounds on $a_1^2$ follow from the intermediate value theorem.

Now assume $k\geq 2$.
Again by the intermediate value theorem, it suffices to show
\[
H_k\bigg(\frac{1}{k}-\frac{1}{2k^3}\bigg)>0
\qquad\text{and}\qquad
H_k\bigg(\frac{1}{k}+\frac{1}{4k^3}\bigg)<0.
\]
To this end, we parameterize the intervening interval by
\[
q=\frac{1}{k}+\frac{c}{k^3},
\qquad
-\frac{1}{2}\leq c\leq \frac{1}{4}.
\]
We approximate the integrand of $H_k(\frac{1}{k}+\frac{c}{k^3})$ with a function that depends polynomially on $x:=1/k$.
The Taylor expansion of the integrand $F$ of $H_k(x+cx^3)$ about $x=0$ takes the form
\[
F(x,c,u)
=
C_1(u)\cdot x+C_2(u)\cdot x^2+C_3(c,u)\cdot x^3+R(x,c,u),
\]
and then integrating over $u$ gives
\begin{equation}
\label{eq.H expansion}
H_k\bigg(\frac1k+\frac{c}{k^3}\bigg)
=
-\frac{\sqrt3\pi(16c+1)}{48k^3}
+\underbrace{\int_0^{2\pi}
R(x,c,u)
\,du
}_{E(x,c)},
\qquad
x=\frac{1}{k}.
\end{equation}
One may use interval arithmetic to obtain
\[
\bigg|\frac{E(x,c)}{x^4}\bigg|
<1,
\qquad
0<x\leq\frac{1}{2},
\qquad
-\frac{1}{2}\leq c\leq\frac{1}{4}.
\]
Taking $c=-\frac{1}{2}$ and $c=\frac{1}{4}$ in~\eqref{eq.H expansion} and applying this bound, we get
\[
H_k\bigg(\frac{1}{k}-\frac{1}{2k^3}\bigg)
=\frac{7\sqrt{3}\pi}{48k^3}
+E\bigg(\frac{1}{k},-\frac{1}{2}\bigg)
>\frac{7\sqrt{3}\pi}{48k^3}
-\frac{1}{k^4}
>0,
\]
\[
H_k\bigg(\frac{1}{k}+\frac{1}{4k^3}\bigg)
=-\frac{5\sqrt{3}\pi}{48k^3}
+E\bigg(\frac{1}{k},\frac{1}{4}\bigg)
<-\frac{5\sqrt{3}\pi}{48k^3}+\frac{1}{k^4}
<0,
\]
where the inequalities use the fact that $k\geq 2$.
\end{proof}

\section{Short curves for $t=2$, $d$ even}\label{sec:short2}

In this section, we apply the general ansatz from Lemma~\ref{lem.ansatz} to construct short $2$-design curves in even-dimensional spheres.

\subsection{Asymptotically shortest curves for $t=2$, $d$ even}

In this section, we show that the lower bound in Theorem~\ref{thm.lower bound} is asymptotically sharp when $t=2$ and $d$ is even and large.
Indeed, this lower bound is nearly saturated by the family of curves constructed in Lemma~\ref{lem.ansatz} and Theorem~\ref{thm.sine ansatz}.

\begin{thm}
\label{thm.asymptotically shortest}
For every positive even integer $d$, each of the following holds:
\begin{itemize}
\item[(a)]
Every $2$-design curve in $S^d$ has length at least
\[
\frac{\pi}{\sqrt{3}}\cdot d
+\frac{5\pi}{2\sqrt{3}}
+1.44\cdot d^{-1}.
\]
\item[(b)]
There exists a $2$-design curve in $S^d$, constructed using
Lemma~\ref{lem.ansatz} and Theorem~\ref{thm.sine ansatz}, 
with length at most
\[
\frac{\pi}{\sqrt{3}}\cdot d
+\frac{5\pi}{2\sqrt{3}}
+5.86\cdot d^{-1}.
\]
\end{itemize}
\end{thm}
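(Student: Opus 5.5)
For part~(a), I will specialize Theorem~\ref{thm.lower bound} to $t=2$, so $m=1$, with $d=2k$. Then $N_{2,d}=1+(d+1)=d+2$, so $A_{2,d}$ is the sum of the first $d+1=2k+1$ terms of $1,1,4,4,\ldots$. That is
\[
A_{2,d}=2\sum_{j=1}^k j^2+(k+1)^2=\tfrac{k(k+1)(2k+1)}{3}+(k+1)^2 .
\]
Also $B_{2,d}=\frac{1\cdot(d+1)}{d+2}\,(d+2)=d+1=2k+1$.

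The lower bound is therefore
\[
2\pi\sqrt{\tfrac{k(k+1)}{3}+\tfrac{(k+1)^2}{2k+1}}.
\]
Substituting $k=d/2$, the quantity under the root becomes
\[
\tfrac{d^2}{12}+\tfrac{5d}{12}+\tfrac34+\tfrac{1}{4(d+1)}.
\]
So the bound equals $\frac{\pi d}{\sqrt3}\sqrt{1+y}$ with
\[
y=\tfrac{5}{d}+\tfrac{9}{d^2}+\tfrac{3}{d^2(d+1)} .
\]
I will use the elementary inequality $\sqrt{1+y}\ge 1+\frac y2-\frac{y^2}{8}$ for $y\ge0$. This gives
\[
\tfrac{\pi d}{\sqrt3}+\tfrac{5\pi}{2\sqrt3}+\tfrac{11\pi}{8\sqrt3}\,d^{-1}+O(d^{-2}).
\]
Since $\frac{11\pi}{8\sqrt3}\approx2.49>1.44$, the claimed inequality holds once the $O(d^{-2})$ terms are controlled explicitly. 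I will do this for $d\ge d_0$ with a modest $d_0$, and check the finitely many smaller even $d$ numerically.

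For part~(b), I will take the curve of Lemma~\ref{lem.ansatz} with the data of Theorem~\ref{thm.sine ansatz}. Set $x_j=x=\sqrt{(1-z^2)/k}$ for all $j$ and $z=a_k\sin(k+1)\theta$. The squared speed is
\[
\|\gamma'\|^2=\Big(\sum_{j=1}^k j^2\Big)x^2+kx'^2+z'^2 .
\]
Because $kxx'=-zz'$, we get $kx'^2=z^2z'^2/(1-z^2)$, hence $kx'^2+z'^2=z'^2/(1-z^2)$. Substituting $u=(k+1)\theta$ and $q=a_k^2$, and using periodicity, the length becomes
\[
L_k(q)=\int_0^{2\pi}\sqrt{\tfrac{(k+1)(2k+1)}{6}(1-q\sin^2u)+\tfrac{q(k+1)^2\cos^2u}{1-q\sin^2u}}\,du .
\]
This is exactly the square-root factor appearing in $H_k$. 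Theorem~\ref{thm.sine ansatz} pins $q$ in the interval $[\frac1k-\frac1{2k^3},\frac1k+\frac1{4k^3}]$.

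The plan for (b) mirrors the proof of Theorem~\ref{thm.sine ansatz}. I will write $q=x+cx^3$ with $x=1/k$ and $c\in[-\frac12,\frac14]$, and Taylor-expand the integrand in $x$. The leading terms should integrate to $\frac{2\pi k}{\sqrt3}+\frac{5\pi}{2\sqrt3}+O(1/k)$, which matches $\frac{\pi d}{\sqrt3}+\frac{5\pi}{2\sqrt3}$ when $d=2k$. I will compute the $1/k$ coefficient explicitly; it depends on $c$, so I take its maximum over the allowed range of $c$. I will then bound the remainder uniformly in $c$ by interval arithmetic for $0<x\le\frac12$, as was done for $E(x,c)$. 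The case $k=1$, where $q\in(0.820,0.821)$, is handled by a direct interval-arithmetic evaluation of $L_1(q)$.

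The main obstacle is this rigorous remainder bound: the constant $5.86\,d^{-1}$ leaves limited room. If the crude bound $|R/x^{2}|<C$ is too lossy for small $k$, I will check the small cases $k\le k_0$ individually by interval arithmetic on $L_k$ over the $q$-interval. Here I would exploit that $L_k$ is monotone in $q$, which I expect but would verify, so that only the endpoint values of $q$ need to be evaluated.
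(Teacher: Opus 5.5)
Your proposal is correct. Part (b) follows the paper's proof essentially verbatim. You use the same length integral $L_k(q)$ and the same substitution $q=x+cx^3$ with $x=1/k$ and $c\in[-\frac12,\frac14]$. You bound the Taylor remainder by interval arithmetic for $0<x\le\frac12$ and treat $k=1$ separately; there the paper proves $L_1'>0$ on $[0,1]$ and evaluates at $q=0.821$.

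One correction: the $1/k$ coefficient does not depend on $c$. The parameter $c$ enters $q$ only at order $k^{-3}$, while $\partial_q L_k=O(k)$, so $c$ first appears at order $k^{-2}$. The paper obtains $L_k=\frac{2\pi k}{\sqrt3}+\frac{5\pi}{2\sqrt3}+\frac{17\pi}{16\sqrt3\,k}+E(x,c)$. Since $\frac{17\pi}{8\sqrt3}\approx 3.854$, only about $1.003/k$ is left over for the remainder. That is why the paper needs the fairly sharp interval-arithmetic bound $E(x,c)/x<1$, and your fallback of checking small $k$ directly is a sensible safeguard.

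For (a), your values $A_{2,2k}=\frac{k(k+1)(2k+1)}{3}+(k+1)^2$, $B_{2,2k}=2k+1$ and your $y$ are right, but your route to the explicit constant differs from the paper's. The paper chooses the coefficient $4\sqrt6-9$ in $\frac{\pi}{\sqrt3}\big(d+\frac52+(4\sqrt6-9)d^{-1}\big)$ so that the inequality is an equality at $d=2$. It then proves the inequality for all $d>2$ at once by squaring and reducing to a quadratic with positive coefficients; the constant $1.44$ is just $\frac{\pi}{\sqrt3}(4\sqrt6-9)\approx 1.447$ rounded down.

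Your truncation $\sqrt{1+y}\ge 1+\frac y2-\frac{y^2}{8}$ gives the better asymptotic constant $\frac{11\pi}{8\sqrt3}\approx 2.49$. However, it is useless for small $d$: at $d=2$ you have $y=5$. The truncated inequality only begins to hold around $d=20$, so you must check $d=2,4,\ldots,18$ from the exact expression. That is rigorous. Note, though, that $d=2$ is the binding case: the exact bound $2\sqrt2\,\pi\approx 8.8858$ beats the target $\frac{9\pi}{2\sqrt3}+0.72\approx 8.8821$ by less than $0.004$. The paper's algebraic version buys a uniform proof with no case analysis; yours buys a cleaner view of the true asymptotics.
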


\begin{rem}
Impressively, the lower bound is on the order of $d$, and the difference between the upper and lower bounds is on the order of $d^{-1}$.
When $d$ is large, that means we get incredibly precise estimates for the length of the shortest $2$-design curve in $S^d$.
For example, when $d=1000$, this shortest length is $1818.337\pm0.003$.
\end{rem}

\begin{proof}[Proof of Theorem~\ref{thm.asymptotically shortest}]
For (a), Theorem~\ref{thm.lower bound} supplies the lower bound
\[
2\pi\cdot\sqrt{\frac{A_{2,d}}{B_{2,d}}}
=
\frac{\pi}{\sqrt{3}}\cdot
\sqrt{\frac{d(d+2)(d+1)+3(d+2)^2}{d+1}}
\geq
\frac{\pi}{\sqrt{3}}\cdot\bigg(
d+\frac52+\big(4\sqrt6-9\big)d^{-1}\bigg).
\]
The first two terms of the lower bound were selected to match the leading terms (in the large-$d$ regime) of the left-hand side, while the third term was selected so that the bound is sharp at $d=2$.
One can verify this final inequality holds for $d>2$ by squaring, multiplying by $\frac{4d^2(d+1)}{d-2}$, and then rearranging to a quadratic in $d$ with all positive coefficients.
Since $\frac{\pi}{\sqrt{3}}\big(4\sqrt6-9\big)>1.44$,
this proves (a).

For (b), write $d=2k$, and let $q_k:=a_k^2$ be the parameter supplied by Theorem~\ref{thm.sine ansatz}.  
For $q\in(0,1)$, denote the length of the corresponding curve by
\[
L_k(q)
:=
\int_0^{2\pi}
\sqrt{
\frac{(k+1)(2k+1)}{6}\big(1-q\sin^2u\big)
+\frac{q(k+1)^2\cos^2u}{1-q\sin^2u}
}\,du.
\]
For $k=1$, one may differentiate under the integral sign and then use interval arithmetic to show that $L_1'(q)>0$ for every $q\in[0,1]$.
Then
\[
L_1(q_1)
\leq L_1(0.821)
<\frac{2\pi}{\sqrt3}
+\frac{5\pi}{2\sqrt3}
+\frac{5.86}{2},
\]
where the strict inequality is also obtained by interval arithmetic.
Thus, the desired upper bound holds when $d=2$.

For $k\geq2$, Theorem~\ref{thm.sine ansatz} gives that there exists $c\in[-\frac{1}{2},\frac{1}{4}]$ such that $q_k=\frac1k+\frac{c}{k^3}$.
Imitating the proof of Theorem~\ref{thm.sine ansatz}, put $x:=1/k$ and denote the integrand of $L_k(x+cx^3)$ by $\frac{1}{x}\Psi(x,c,u)$.
The Taylor expansion of $\Psi(\cdot,c,u)$ about $x=0$ takes the form
\[
\Psi(x,c,u)
=C_0 + C_1(u)\cdot x + C_2(u)\cdot x^2+R(x,c,u),
\]
and then integrating $\frac{1}{x}\Psi(x,c,u)$ over $u$ gives
\[
L_k\bigg(\frac1k+\frac{c}{k^3}\bigg)
=
\frac{2\pi k}{\sqrt3}
+\frac{5\pi}{2\sqrt3}
+
\frac{17\pi}{16\sqrt{3}k}
+
\underbrace{\int_0^{2\pi}\frac{1}{x}
R(x,c,u)
\,du}_{E(x,c)},
\qquad x=\frac{1}{k}.
\]
One may use interval arithmetic to obtain
\[
\frac{E(x,c)}{x}<1,
\qquad
0< x\leq\frac12,
\qquad
-\frac12\leq c\leq\frac14.
\]
Since $k\geq2$, this bound on $E$ gives
\[
L_k(q_k)
\leq
\frac{2\pi k}{\sqrt3}
+\frac{5\pi}{2\sqrt3}
+
\bigg(
\frac{17\pi\sqrt3}{48}+1
\bigg)\frac1k.
\]
Since $d=2k$ and $\frac{17\pi\sqrt3}{24}+2<5.86$, this proves (b).
\end{proof}

\subsection{A putatively shortest curve for $t=2$, $d=2$}
\label{subsec.putatively shortest}

As a brief review, shortest spherical $2$-design curves are best understood in odd-dimensional spheres; see Example~\ref{ex.opt for t=2 and d odd}.
For even-dimensional spheres, Theorem~\ref{thm.asymptotically shortest} shows that the curves constructed in Lemma~\ref{lem.ansatz} and Theorem~\ref{thm.sine ansatz} are \textit{asymptotically} shortest $2$-design curves, but a nonasymptotic theory requires a different approach.
To help rectify this situation, we numerically hunted for a piecewise-geodesic $2$-design curve in $S^2$ of minimum length, and the results are summarized in Table~\ref{tab:S2-2-design-lengths}.

\begin{table}[h]
\centering
\begin{tabular}{ll}
\hline
curve & length \\
\hline
curve constructed by Lemma~\ref{lem.ansatz} and Theorem~\ref{thm.sine ansatz} 
    & $10.247113378452\ldots$ \\
Ehler--Gr\"ochenig curve $\gamma^{(2,a_2)}$; see~\cite{EhlerG:23}
    & $\phantom{1}9.786008023999\ldots$ \\
shortest known piecewise-geodesic $2$-design curve
    & $\phantom{1}9.743310534613\ldots$ \\
putatively shortest $2$-design curve
    & $\phantom{1}9.743299344305\ldots$ \\
\hline
\end{tabular}
\caption{Lengths of $2$-design curves in $S^2$.}
\label{tab:S2-2-design-lengths}
\end{table}

Notably, the shortest curve we found (which consisted of $1024$ geodesic pieces) is shorter than the $d=2$ version of the curve constructed in Lemma~\ref{lem.ansatz} and Theorem~\ref{thm.sine ansatz}, and even the ``Ehler--Gr\"ochenig curve'' from Proposition~3.1(ii) in~\cite{EhlerG:23}.
Much like the Ehler--Gr\"ochenig curve, this piecewise-geodesic curve resembles the seam of a tennis ball.
Accordingly, we expect the shortest $2$-design curve to take the form
\[
\gamma(\theta)
:=\Big(\,\sqrt{1-z(\theta)^2}\cos\theta,~\sqrt{1-z(\theta)^2}\sin\theta,~z(\theta)\,\Big),
\]
where $z$ is smooth and $\frac{\pi}{2}$-antiperiodic, and we may translate $z$ so that it is odd and increasing over $(0,\frac{\pi}{4})$, and furthermore, $z(\cdot+\frac{\pi}{4})$ is even.
(These specifications ensure that the resulting curve exhibits $D_4$ symmetry.)
By virtue of these symmetries, $z$ is determined by its restriction $w:=z|_{[0,\frac{\pi}{4}]}$.
In what follows, we use the calculus of variations to identify an arclength-critical $2$-design curve of this form.
The length of this putatively shortest $2$-design curve is given in the last row of Table~\ref{tab:S2-2-design-lengths}; see Figure~\ref{fig.z coord} for an illustration of the resulting $w$ and $z$.

\begin{figure}
\input{z_tikz_coords.tex}
\centering
\begin{tikzpicture}
\begin{axis}[
    width=0.95\textwidth,
    height=0.35\textwidth,
    axis lines=middle,
    xmin=0, xmax=2.05,
    ymin=-1, ymax=1,
    xlabel={$\theta$},
    xlabel style={
        at={(axis cs:2,0)},
        anchor=west,
        xshift=1em
    },
    xtick={0,0.25,0.5,0.75,1,1.25,1.5,1.75,2},
    xticklabels={
        $0$, $\frac{\pi}{4}$, $\frac{\pi}{2}$, $\frac{3\pi}{4}$,
        $\pi$, $\frac{5\pi}{4}$, $\frac{3\pi}{2}$,
        $\frac{7\pi}{4}$, $2\pi$
    },
    ytick={-0.845604052571,0,0.845604052571},
    yticklabels={$-a$,$0$,$a$},
]

\addplot[
    gray,
    thick,
    opacity=0.55,
    domain=0:2,
    samples=400
]
{0.905785627046754*sin(deg(2*pi*x))};

\addplot[blue, thick] coordinates {\zcoords};
\addplot[red, very thick] coordinates {\zhighlightcoords};

\addplot[black, dashed, domain=0:2, samples=2] {0.845604052571};
\addplot[black, dashed, domain=0:2, samples=2] {-0.845604052571};

\end{axis}
\end{tikzpicture}
\caption{Plot of the function $z$ that determines the putatively optimal $2$-design curve in $S^2$ via the ansatz in Lemma~\ref{lem.ansatz}. The portion in red is the restriction $w:=z|_{[0,\frac{\pi}{4}]}$. For comparison, the third coordinate $\theta\mapsto a_1\sin2\theta$ of the $2$-design curve constructed by Lemma~\ref{lem.ansatz} and Theorem~\ref{thm.sine ansatz} is also plotted in gray.}
\label{fig.z coord}
\end{figure}

For $\theta\in(0,\frac{\pi}{4})$, it holds that
\[
\|\gamma'(\theta)\|
=F(w(\theta),w'(\theta))
:=\sqrt{1-w(\theta)^2+\tfrac{w'(\theta)^2}{1-w(\theta)^2}}.
\]
We seek to minimize the length
\[
A[w]
:=8\int_0^{\pi/4}F(w(\theta),w'(\theta))d\theta
\]
subject to the constraint that $\gamma$ is a $2$-design curve:
\[
B[w]
:=\int_0^{\pi/4}\big(w(\theta)^2-\tfrac{1}{3}\big)F(w(\theta),w'(\theta))d\theta
=0.
\]
This suggests that we find a critical point of the Lagrangian
\[
L_\lambda[w]
:=A[w]+\lambda B[w]
=\int_0^{\pi/4}\underbrace{\Big(8+\lambda \big(w(\theta)^2-\tfrac{1}{3}\big)\Big)}_{q_\lambda(w(\theta))}F(w(\theta),w'(\theta))d\theta.
\]
The integrand of $L_\lambda[w]$ depends on $\theta$, but this dependence factors through $w$ and $w'$.
As such, we may apply the Beltrami identity, which in our case states that every critical point $w$ of $L_\lambda$ has the property that
\[
\frac{q_\lambda(w(\theta))(1-w(\theta)^2)}{F(w(\theta),w'(\theta))}
\]
is constant in $\theta$.
In fact, letting $a$ denote the maximum value of $w$, i.e., $w(\frac{\pi}{4})=a$, then since $w'(\frac{\pi}{4})=0$ by the smoothness of $z$, we know this constant equals
\[
c_{a,\lambda}
:=\frac{q_\lambda(a)(1-a^2)}{F(a,0)}
=\Big(8+\lambda\big(a^2-\tfrac{1}{3}\big)\Big)\sqrt{1-a^2}.
\]
This ordinary differential equation is nice enough that we can solve for the inverse function $\varphi_{a,\lambda}\colon w(\theta)\mapsto \theta$ by separation of variables:
\[
\varphi_{a,\lambda}(u)=\int_0^u\frac{c_{a,\lambda}}{(1-w^2)\sqrt{q_\lambda^2(w)(1-w^2)-c_{a,\lambda}^2}}\,dw.
\]
Thus, one may recover the function $w\colon[0,\frac{\pi}{4}]\to\mathbb{R}$ by first finding $a\in(0,1)$ and $\lambda\in\mathbb{R}$ that simultaneously satisfy 
\[
\varphi_{a,\lambda}(a)=\frac{\pi}{4}
\qquad\text{and}\qquad B[\varphi_{a,\lambda}^{-1}]=0
\]
and then put $w=\varphi_{a,\lambda}^{-1}$.
Numerically, this occurs at
\[
a=0.845604052571\ldots
\qquad\text{and}\qquad
\lambda=-10.223863780822\ldots.
\]
Furthermore, the Jacobian of $(a,\lambda)\mapsto(\varphi_{a,\lambda}(a),B[\varphi_{a,\lambda}^{-1}])$ is well conditioned at this point, and so in principle, one may use Newton--Kantorovich technology (\`{a} la~\cite{CohnKM:16}) to prove that an honest $2$-design curve exists nearby.

Recall that for the provably shortest $2$-design curves in $S^{2k-1}$, any evenly sampled set of $2k+1$ points yields a tight spherical $2$-design.
The situation for even $d$ is less satisfying but still yields some structure.
For any curve $\gamma$ that satisfies the general ansatz for $t=2$, $d=2k$ (Lemma~\ref{lem.ansatz}), one may use trigonometric identities and the (anti-)periodicity properties of $z$ and each $x_i$ to show that for any $\alpha \in [0,\frac{\pi}{k+1})$, sampling $\gamma$ at $\{\alpha + \frac{\ell \pi}{k+1} : \ell \in\{0, 1, \ldots, 2k+1\}\}$ yields a spherical $1$-design.  Numerically, for $\alpha \approx 0.050996 \pi$, this sampling of our putatively shortest $2$-design curve in $S^2$ is additionally a regular tetrahedron (i.e., a tight spherical $2$-design).

\section{Discussion}\label{sec:disc}

In this section, we identify several opportunities for future work.
Recall that the main problem of interest is Problem~\ref{prob.main}.
Perhaps surprisingly, we do not know the answer to the following fundamental question:

\begin{prob}
For every $t$ and $d$, does there exist a shortest $t$-design curve in $S^d$? 
\end{prob}

In this paper, we found curves of minimum length by achieving equality in a lower bound.
Examples~\ref{ex.great circle} and~\ref{ex.opt for t=2 and d odd} lead us to wonder:

\begin{prob}\label{prob:thm4tight}
For which pairs $(t,d)$ is the lower bound in Theorem~\ref{thm.lower bound}
sharp?
When the bound is sharp, can one classify the curves that achieve equality?
\end{prob}

Recall that for most choices of $(t,d)$, there does not exist a tight spherical $t$-design in $S^d$~\cite{BannaiD:79,BannaiD:80,BannaiS:81,BannaiMV:05,NebeV:13},  i.e., no such design has cardinality saturating~\eqref{eqn:dgsbd}.
Since being a spherical $t$-design curve requires more structure, one might expect even fewer $(t,d)$ for which Theorem~\ref{thm.lower bound} is sharp.
It might even be possible to completely classify the curves that achieve equality.

Next, we highlight a conjecture suggested by the previous section:

\begin{conj}\label{conj:2S2}
The curve derived in Subsection~\ref{subsec.putatively shortest} is the shortest $2$-design curve in~$S^2$, up to isometry and reparameterization.
\end{conj}

In pursuit of partial progress, we suggest the following:

\begin{prob}
Can the curve derived in Subsection~\ref{subsec.putatively shortest} be certified as
a strict local minimizer of arclength among $2$-design curves in $S^2$, up to isometry and reparameterization?
\end{prob}

Judging by the complexity of this curve, we suspect that shortest $2$-design curves in even-dimensional spheres are generally not easy to describe in closed form.
However, the success we had with Lemma~\ref{lem.ansatz} suggests the following:

\begin{prob}
Do shortest $2$-design curves in even-dimensional spheres always satisfy the ansatz in Lemma~\ref{lem.ansatz} (up to isometry and reparameterization)?
\end{prob}

While this paper mostly focused on $t\leq2$, we also performed extensive numerical experiments on the $3$-design curves described in Section~3 of \cite{Lindblad:24} and Example~2.2 of~\cite{EhlerGK:25} for small $d$, which compel us to pose the following:

\begin{conj}\label{conj:3Sodd}
The curve $\gamma\colon[0,2\pi)\to S^{2k-1}$ defined by
\[
\gamma(\theta)
\,:=\,\frac{1}{\sqrt{k}}\big(
\cos\theta,
~\sin\theta,
~\cos3\theta,
~\sin3\theta,
~\cdots,
~\cos (2k-1)\theta,
~\sin(2k-1)\theta
\,\big)
\]
is the shortest $3$-design curve in $S^{2k-1}$, up to isometry and reparameterization.
\end{conj}
In addition to numerical evidence, these curves share many properties with the provably optimal $2$-design curves in $S^{2k-1}$ (Example~\ref{ex.opt for t=2 and d odd}).
The curves in Conjecture~\ref{conj:3Sodd} are known as \emph{symmetric moment curves} in polytope theory; sampling them over a finite set and taking the convex hull leads to \emph{locally $k$-neighborly polytopes}~\cite{BarvinokN:08,Novik:23,Smilansky:85,Smilansky:90}.
By sampling our curve at \textit{any} set of $4k$ uniformly spaced points in $\bR/2\pi \bZ$ (rather than just the single set tested in Example~2.2 of~\cite{EhlerGK:25}), we will obtain a cross-polytope, i.e., a tight $3$-design in $S^{2k-1}$.
In so many words, this spherical $3$-design curve is a smooth sweeping of tight spherical $3$-designs.
Recall that the optimal $2$-design curves from Example~\ref{ex.opt for t=2 and d odd} are the trigonometric moment curves, where any sampling results in a $k$-neighborly polytope, and these curves are also smooth sweepings of tight spherical $2$-designs.
As a further connection to the optimal $2$-design curves,  both classes of curves are constructed in $S^3$ as liftings from the equatorial great circle in $S^2$~\cite{Lindblad:24}.
Perhaps the existing literature (e.g., \cite{BarvinokN:08,Vinzant:11,BarvinokLN:13,AdamsBF:20,Novik:23}) that characterizes the polytopes and orbitopes formed from these curves could help one approach this conjecture.

If one were to construct short $3$-design curves in even-dimensional spheres (perhaps following an ansatz akin to Lemma~\ref{lem.ansatz}), then one could answer the following:

\begin{prob}
How sharp is the lower bound in Theorem~\ref{thm.lower bound} when $t=3$ and $d$ is large?
\end{prob}

In general, we do not know how to tackle Problem~\ref{prob.main} in the absence of a saturating lower bound.
Perhaps one could use ideas from the proof~\cite{Andreev:00,BoyvalenkovD:01} of the minimal (non-tight) $11$-design on $S^3$ to study sporadic $t$-design curves.
Furthermore, it might be interesting to develop repositories of short piecewise-geodesic $t$-design curves as in the $t$-design case~\cite{SloaneHC:03,HardinS:92,Graef:online,Womersley:online}.
Also, the tighter lower bounds on the cardinality of $t$-designs in~\cite{Yudin:97} might be leveraged to improve our lower bound.
Given the results in~\cite{Lindblad:25} on approximate spherical designs, one might wonder, in the case where $d$ is even, if the minimum attainable arclength scales like $t^{2d-3}$  as $t \rar \infty$.
Note that $2d-3=d-1$ when $d=2$, which in turn is the only even $d$ for which we know asymptotically shortest $t$-design curves~\cite{EhlerG:23}. 

Based on the known shortest $t$-design curves, we are led to wonder:

\begin{prob}
Are shortest $t$-design curves necessarily smooth?
symmetric? contain a minimum-cardinality $t$-design set?
\end{prob}

Every explicit construction in this paper has the property that there is at least one sampling from the curve that is a tight $t$-design. 
When $(t,d) \in \{(1,d),(2,2k+1),(3,2k+1)\}$, the curves are smooth sweepings of tight $t$-designs.
Recall from the discussion at the end of Section~\ref{sec:short2} that for any curve $\gamma$ that satisfies the general ansatz for $t=2$ and $d=2k$ (Lemma~\ref{lem.ansatz}), sampling at the points $\{\alpha + \frac{\ell \pi}{k+1} : \ell \in\{0, 1, \ldots, 2k+1\}\}$ for any $\alpha \in [0,\frac{\pi}{k+1})$ yields a spherical $1$-design and that, at least numerically, one choice of $\alpha$ will yield a tight spherical $2$-design for the putatively shortest $2$-design curve in $S^2$ (Subsection~\ref{subsec.putatively shortest}).
In addition, if we pick 
\[
\alpha = \tfrac{1}{k+1} \arcsin\big( \tfrac{1}{a_k \sqrt{2k+1}}\big)
\]
to sample the asymptotically optimal curves of Theorem~\ref{thm.sine ansatz}, then we also obtain a regular $2k+1$-simplex, i.e., a tight spherical $2$-design in $S^{2k}$.

Interestingly, closed curves that contain a minimum-cardinality $t$-design set enjoy a different lower bound on arclength.
For example, a tight $3$-design in $S^2$ is the vertex set of an orthoplex, which consists of $6$ points of pairwise geodesic distances $\pi$ and $\frac{\pi}{2}$.
In general, any closed curve that contains $N$ points with minimum pairwise distance~$\delta$ has length at least~$N\delta$.
Thus, any closed curve on $S^2$ that contains a tight $3$-design set has length at least $6\cdot \frac{\pi}{2} = 3 \pi$.
The previous paragraph leads one to consider $3$-design curves that contain a tight $3$-design.
Plugging $(t,d)=(3,2)$ into Theorem~\ref{thm.lower bound} yields $2 \pi \sqrt{19/9}<3\pi$, and so no $3$-design curve that contains a tight $3$-design in $S^2$ can saturate the lower bound in Theorem~\ref{thm.lower bound}.

We end with a potpourri of questions.

\begin{prob}
    Is there a connection between longest ropes of fixed thickness in the sphere \cite{GerlachM:11,GerlachM:11b} and shortest $t$-design curves \`a la the connection between spherical codes and designs in~\cite{DelsarteGS:77}?
\end{prob}

\begin{prob}
Are spherical $t$-design curves optimal approximators of uniform measures with respect to other quantifications of approximation strength like $p$-Wasserstein metrics or Sinkhorn divergences?
\end{prob}

\begin{prob}
Consider an $s$-dimensional set $X$ in $S^d$ (satisfying some additional regularity conditions as in~\cite{EhlerG:25}) such that the normalized Hausdorff measure restricted to $X$ yields an exact quadrature of $\Pi_d^t$. Is there a sharp, nonasymptotic, parity-sensitive lower bound on the $s$-dimensional Hausdorff measure of $X$ that specializes to the Delsarte--Goethals--Seidel bound~\eqref{eqn:dgsbd}
when $s=0$ and to Theorem~\ref{thm.lower bound} when $s=1$?
\end{prob}

Care will need to be taken to either exclude positive-measure retracing when restricting to $s=1$ or allowing a multiplicity function.
Results on Marcinkiewicz--Zygmund sampling measures~\cite{EhlerG:25} already provide an asymptotic bound on the order of $t^{d-s}$.

\section*{Acknowledgments}

DGM was supported by NSF DMS 2220304.
DGM thanks OpenAI for complimentary access to ChatGPT-5.5~Pro.
Throughout this research project, the authors used ChatGPT to find relevant literature, to write and run exploratory code, to explore possible proof strategies, and to give feedback on writing. 
All mathematical claims, citations, and computations were checked by the authors, who take full responsibility for the content of this paper.

\appendix

\section{Shortest curves for $t=1$}\label{app:app}

\begin{prop}
Every $1$-design curve in $S^d$ has length at least $2\pi$.
\end{prop}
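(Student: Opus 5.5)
The appendix promises a proof via integral geometry, due to Steinerberger, so I will follow the Crofton-formula route. Let $\gamma$ be a $1$-design curve in $S^d$ of length $L$, with constant-speed parameterization. The $1$-design condition says exactly that the normalized arclength measure has barycenter zero:
\[
\int_0^{2\pi}\gamma(\theta)\,d\theta=0.
\]
The key consequence is that $\gamma$ meets every great hypersphere. Indeed, for any unit vector $v\in S^d$, the function $\theta\mapsto\langle v,\gamma(\theta)\rangle$ has mean zero. It is continuous, so either it vanishes identically or it takes both signs, and in the latter case it has at least two zeros on the circle $[0,2\pi)$. Thus the hyperplane $v^\perp$ meets the closed curve $\gamma$ in at least two parameter values for every $v$.

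Next I will invoke the spherical Crofton formula for rectifiable curves:
\[
L(\gamma)=\pi\int_{S^d}\#\{\theta\in[0,2\pi):\langle v,\gamma(\theta)\rangle=0\}\,d\sigma(v).
\]
The constant is fixed by the great circle: its length is $2\pi$, and almost every great hypersphere meets it exactly twice, so the constant is $\pi$. To justify the formula I would prove it for a geodesic arc of length $\ell$. There, the count is $1$ for a set of $v$ of $\sigma$-measure $\ell/\pi$, as one sees by projecting onto the plane of the arc. Additivity then extends the formula to piecewise-geodesic curves. Inscribed geodesic polygons approximate $\gamma$, and lower semicontinuity of the crossing count (Fatou) passes to the limit. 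I only need the inequality direction
\[
L(\gamma)\ge \pi\int\#(\cdots)\,d\sigma,
\]
and for that the polygon approximation plus Fatou suffices. Each transverse crossing of $\gamma$ is a limit of crossings of fine inscribed polygons, so the lim inf of the polygon counts dominates the count for $\gamma$, while the polygon lengths are at most $L(\gamma)$.

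Combining the two steps, the count is at least $2$ for every $v$ except the measure-zero set where $\langle v,\gamma\rangle\equiv0$. If that set had positive measure, then $\gamma$ would lie in a hyperplane for a positive-measure set of normals, forcing $\gamma$ to be constant, which is impossible. Hence
\[
L(\gamma)\ge\pi\cdot2=2\pi.
\]

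The main obstacle is the rigorous lower-semicontinuity/Fatou step. Zeros of $\langle v,\gamma\rangle$ can be tangential, and polygons might miss such crossings. To handle this I would count sign changes rather than zeros. A mean-zero function that is not identically zero has at least two sign changes around the circle, so it is enough to work with sign changes. Sign changes are stable under uniform approximation, since fine inscribed polygons converge uniformly, and so the count of sign changes is lower semicontinuous. This makes the Fatou argument clean.

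Alternatively, the result follows directly from Theorem~\ref{thm.lower bound} with $t=1$, $m=0$. In that case $N_{1,d}=2$, $A_{1,d}=1$, and $B_{1,d}=1$, which gives length at least $2\pi$. The appendix's point, however, is the independent integral-geometric proof above.
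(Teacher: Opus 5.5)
Your proof is correct and is essentially the paper's appendix argument: the zero-barycenter condition forces every great hypersphere $v^\perp$ (outside a null set of $v$) to meet $\gamma$ at least twice, and the spherical Crofton formula $L(\gamma)=\pi\,\mathbb{E}[\#(\gamma\cap v^\perp)]$ then gives $L(\gamma)\ge 2\pi$. Your step ``mean zero and not identically zero, hence two sign changes'' replaces the paper's step ``almost surely transversal, hence an even number of intersections'', and your inscribed-polygon/Fatou sketch proves the one Crofton inequality that the paper simply cites. There is one minor slip: a positive-measure set of such normals forces $\gamma\equiv 0$, not merely a constant curve, and that contradicts $\|\gamma\|=1$.
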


\begin{proof}
Fix a $1$-design curve $\gamma$.
For each $v\in S^d$, the linear functional $\langle \cdot,v\rangle$ is a polynomial of degree $1$, and so
\[
\frac{1}{L(\gamma)}\int_\gamma \langle \cdot,v\rangle ds=\int_{S^d}\langle \cdot,v\rangle d\sigma
=0.
\]
Then $\langle\cdot,v\rangle$ is neither strictly positive nor strictly negative on all of $\gamma$.
Since $\gamma$ is connected, the intermediate value theorem gives that $v^\perp$ intersects $\gamma$ nontrivially.

Now draw $v$ uniformly from $S^d$.
With probability $1$, the subspace $v^\perp$ intersects $\gamma$ finitely often with all transversal intersections.
Since $\gamma$ is closed, the number of intersections is even (and thus $\geq2$, since the intersection is nontrivial).
The spherical Crofton formula then gives
\[
L(\gamma)
=\pi\cdot\mathbb{E}\big[\#(\gamma\cap v^\perp)\big]
\geq2\pi.
\qedhere
\]
\end{proof}

\end{document}